\numberwithin{equation}{section}
\newtheorem{theorem}{Theorem}[section]
\newtheorem*{theorem*}{Theorem}
\newtheorem{lemma}[theorem]{Lemma}
\newtheorem{proposition}[theorem]{Proposition}
\newtheorem{remark}[theorem]{Remark}
\newtheorem*{remark*}{Remark}
\theoremstyle{definition}{

\newtheorem*{definition*}{Definition}

\newtheorem*{question*}{Question}
\newtheorem*{example*}{Example}
\newtheorem*{examples*}{Examples}
}
\newcommand{\abbr}[1]{{\sc{\lowercase{#1}}}}
\newcommand{\B}{\mathbb B}
\newcommand{\E}{\mathbb E}
\renewcommand{\P}{\mathbb P}
\newcommand{\Q}{\mathbb Q}
\newcommand{\R}{\mathbb R}
\newcommand{\cK}{\mathcal K}
\newcommand{\cA}{{\mathcal A}}
\newcommand{\cE}{{\mathcal E}}
\newcommand{\cF}{{\mathcal F}}
\newcommand{\cM}{{\mathcal M}}
\newcommand{\cS}{{\mathcal S}}
\newcommand{\fs}{\mathfrak s}
\newcommand{\ft}{\mathfrak t}
\newcommand{\one}{\mathbbm{1}}
\renewcommand{\epsilon}{\varepsilon}
\newcommand{\Tr}{\textsc{t}}
\author{Amir Dembo}
\address{Amir Dembo\hfill\break
Mathematics  Department and Statistics Department\\ Stanford University\\ 
Stanford, CA 94305, USA.}
\email{adembo@stanford.edu}
\author{Eyal Lubetzky}
\address{Eyal Lubetzky\hfill\break
Courant Institute 
\\ New York University\\
251 Mercer Street\\ New York, NY 10012, USA.}
\email{eyal@courant.nyu.edu}
\author{Ofer Zeitouni}
\address{Ofer Zeitouni\hfill\break
Department of Mathematics\\
Weizmann Institute of Science\\
Rehovot 76100, Israel\\
and
Courant Institute\\
New York University\\
251 Mercer Street\\ New York, NY 10012, USA.}
\email{ofer.zeitouni@weizmann.ac.il}
\title{Universality for Langevin-like spin glass dynamics}
\begin{document}

\begin{abstract}
We study dynamics for asymmetric spin glass models, proposed by Hertz et al.\ and Sompolinsky et al.\ in the 1980's in the context of neural networks: particles evolve via a modified Langevin dynamics for the Sherrington--Kirkpatrick model with soft spins, whereby the disorder is  i.i.d.\ standard Gaussian rather than symmetric. 
Ben Arous and Guionnet~(1995), followed by Guionnet (1997), proved for Gaussian interactions that  as the number of particles grows, 
 the short-term empirical law of this dynamics converges a.s.\ to a non-random law~$\mu_\star$ of a ``self-consistent single spin dynamics,'' as predicted by physicists. 
Here we obtain universality of this fact:
 For asymmetric disorder given by i.i.d.\ variables of zero mean, unit variance 
and exponential or better tail decay, at every temperature, the empirical law of sample paths of the 
 Langevin-like dynamics in a fixed time interval has the same a.s.\ limit~$\mu_\star$.
\end{abstract}

{\mbox{}\vspace{-0.6cm}
\maketitle
}
\vspace{-0.6cm}

\section{Introduction}\label{secIntro}

Consider the dynamics for asymmetric spin glass models, studied in the context of neural networks e.g.\ by
Hertz et al.~\cite{HGS86} and Cristani and Sompolinsky~\cite{CS87}, given by
\begin{equation}
	\label{eq:main_eq}
	d X_t^{(i)} = d B_t^{(i)} - U_1'(X_t^{(i)}) d t + \frac\beta{\sqrt{N}}\sum_{j=1}^N J_{ij} X_t^{(j)} dt\qquad(i=1,\ldots,N)\,,
\end{equation} 
where $B_t$ is $N$-dimensional Brownian motion, $X_t \in [-\fs,\fs]^N$ for some finite $\fs$, the potential $U_1$ is some smooth function satisfying that $U_1(x)\to\infty$ as $|x|\to\fs$ (e.g.\ a double-well potential at $\pm 1$ with $\fs =2$), the parameter $\beta>0$ is the inverse-temperature and the interactions $J_{ij}$ are quenched (frozen) i.i.d.\ standard Gaussian random variables.

If instead one were to take a symmetric disorder (that is, $J_{ij}=J_{ji}$ i.i.d.\ standard Gaussian for each pair $\{i,j\}$) then the stochastic differential system (\abbr{sds})~\eqref{eq:main_eq} would be precisely Langevin dynamics for the soft-spin Sherrington--Kirkpatrick (\abbr{sk}) model; see, e.g.,~\cite{SZ81,SZ82,MPV87} and~\cite{BG97,BG98,Guionnet97} for studies of the short-term dynamics in that case.

The asymmetric nature of the disorder $J_{ij}$ aids some aspects of the analysis via the extra independence,
yet makes the dynamics non-reversible, whence various useful tools (e.g., the Fluctuation Dissipation Theorem used in~\cite{SZ82} to analyze the symmetric~case) become unavailable. As argued e.g.\ in~\cite{CS87} (see also~\cite{KZ91,DGZ87} on the related Hopfield model~\cite{Hopfield82}), the asymmetric
disorder
seems a better model for the interactions between neurons (cf.\ Remark~\ref{rem:neural} for other flavors of the model in the context of neural networks).

Many of the dynamical quantities of interest, such as spin autocorrelation and response functions, may be read from the 
thermodynamic limit ($N \to \infty$) of the empirical measure $\mu_N$ of
sample-paths of the $N$ particles in a given time interval $[0,T]$; 
that is, 
\begin{equation}\label{eq:def-empirical} 
\mu_N=\frac1N \sum_{i=1}^N \delta_{X_\cdot^{(i)}} \in \cM_1(C([0,T]))\,.\end{equation}
%
Ben~Arous and Guionnet~\cite{BG95}, followed by~\cite{Guionnet97} (cf.~\cite{BG98}) were able to show that, from an i.i.d.\ initial state, $\mu_N$ converges a.s.\ to a law $\mu_\star$ of a \emph{self-consistent single-spin dynamics}, a non-Markovian diffusion involving only one spin,
  as predicted for this system in~\cite{CS87}. The proofs in~\cite{BG95,Guionnet97} (and in follow-up works on variants of this model, e.g., Glauber-like dynamics~\cite{Rieger1989,Grunwald96} and the dynamics where the \abbr{sds} has an extra non-linearity~\cite{FMT19}) relied in an essential way on special properties of Gaussian random variables.
  
In the related \abbr{sk} model, the 
first rigorous proof \cite{GuerraToninelli02,Talagrand03} that the free energy has an a.s.\ limit 
was specific to Gaussian disorder, as was the identification of this limit. Talagrand~\cite{Talagrand02} later proved that the same limit must be obtained 
under interactions of Bernoulli $\pm1$ random variables. This universality property 
was further generalized in~\cite{CarmonaHu06,Chatterjee05} to any i.i.d.\ 
interactions given by a variable $J$ satisfying $\E J=0$ and $\E J^2 =1$.

Our goal in this work is to obtain a similar universality result for the system~\eqref{eq:main_eq},
where a self-consistent a.s.\ limit was till now rigorously verified only in the case of Gaussian interactions.
To be precise, consider the probability measures 
$\P_N^\beta$ of the triplet $({\bf J},B_\cdot,X_\cdot)$ corresponding to the~\abbr{sds}~\eqref{eq:main_eq} with an initial state that is a product~$\nu_0^{\otimes N}$ which places no mass on the boundary, i.e.,
\[ \nu_0 \in \cM_1((-\fs,\fs))\,,
\]
and
the 
$C_2((-\fs,\fs))$ potential function $U_1(x)\to\infty$ as $|x|\to \fs$ fast enough to confine the solution 
of \eqref{eq:main_eq} within 
$(-\fs,\fs)$.
Specifically, suppose (as in~\cite[p.~458]{BG95}), that
\begin{equation}\label{eq:U-assump}
\lim_{|x|\uparrow \fs} \int_{0}^x e^{2U_1(t)} \bigg(\int_0^t e^{-2U_1(v)}dv\bigg) dt = \infty\,,
\end{equation}
which is satisfied for instance by $U_1(x) =  -\log(\fs^2-x^2)$. In this context, the heuristic reasoning 
for the expected universality, as in the case of the free energy in the \abbr{sk} model, is due to the invariance principle, 
whereby one expects the interaction term $N^{-1/2} {\bf J} X_t$ in~\eqref{eq:main_eq} to approximately 
follow a Gaussian law when $N \to \infty$, irrespective of the marginal 
laws of the independent disorder variables $J_{ij}$.
However, even for fully independent (i.e. non-symmetric), Gaussian disorder, the limit $\mu_\star$ of 
$\mu_N$ is characterized only as the global minimum of a certain rate function, corresponding to the variational problem of a large deviation principle (\abbr{ldp}). Consequently, one has
to establish the sought-after universality at the level of large deviations. For $\{J_{ij}\}$ which are
fully i.i.d Gaussian variables and high temperature (i.e. $\beta^2 \fs^2 T < 1 $), such \abbr{LDP} 
was proved in~\cite{BG95} by relying on exact Gaussian calculus for the 
Radon--Nykodim derivative (\abbr{rnd}) w.r.t.\ a reference system with independent particles, 
corresponding to the $\beta=0$ measure (the corresponding a.s. convergence $\mu_N \to \mu_\star$ 
was thereafter extended in \cite{Guionnet97} to all $\beta<\infty$).  Unfortunately, such explicit calculus 
does not exist for any other law of interactions. 
Moreover, any attempt to control the \abbr{rnd} of Gaussian vs.\ non-Gaussian interactions via an argument such as 
Lindeberg's method must be done with utmost care, since it typically 
yields only an $N^{-c}$ additive error term, which is potentially 
multiplied --- and hence outweighed --- by an $e^{c N}$ factor from the \abbr{rnd} (see Remark \ref{rem:explain}).

Our results hold for any random interactions consisting of independent $\{J_{ij}\}$, 
whose laws 
may depend on $i,j,N$, subject only to the following moment and tail assumptions:
\begin{align}\label{eq:J-first-second-mom}
&\E J_{ij} = 0\,,\qquad  \E J_{ij} ^2 = 1\,, \\
&\lim_{\epsilon \to 0} \; 
\sup_{i,j,N} \Big\{ \E \big[ e^{\epsilon |J_{ij}|} \big] \Big\} < \infty \,;
\label{eq:J-assump-unif-mom}
\end{align}
that is, independent $\{J_{ij}\}$ of 
zero mean, unit variance and a uniform exponential, or better, tail  
decay. (In fact, the uniformity over $j$ in \eqref{eq:J-assump-unif-mom} is not
needed and this assumption may be relaxed into the conditions~\eqref{eq:J-assump-mgf}--\eqref{eq:cond-sing-max} stated later; see Remark~\ref{rem:non-uniform-vars}.)

\begin{figure}
    \centering
    \begin{tikzpicture}
    \node (fig1) at (-3.5,0) {
	\includegraphics[width=.45\textwidth]{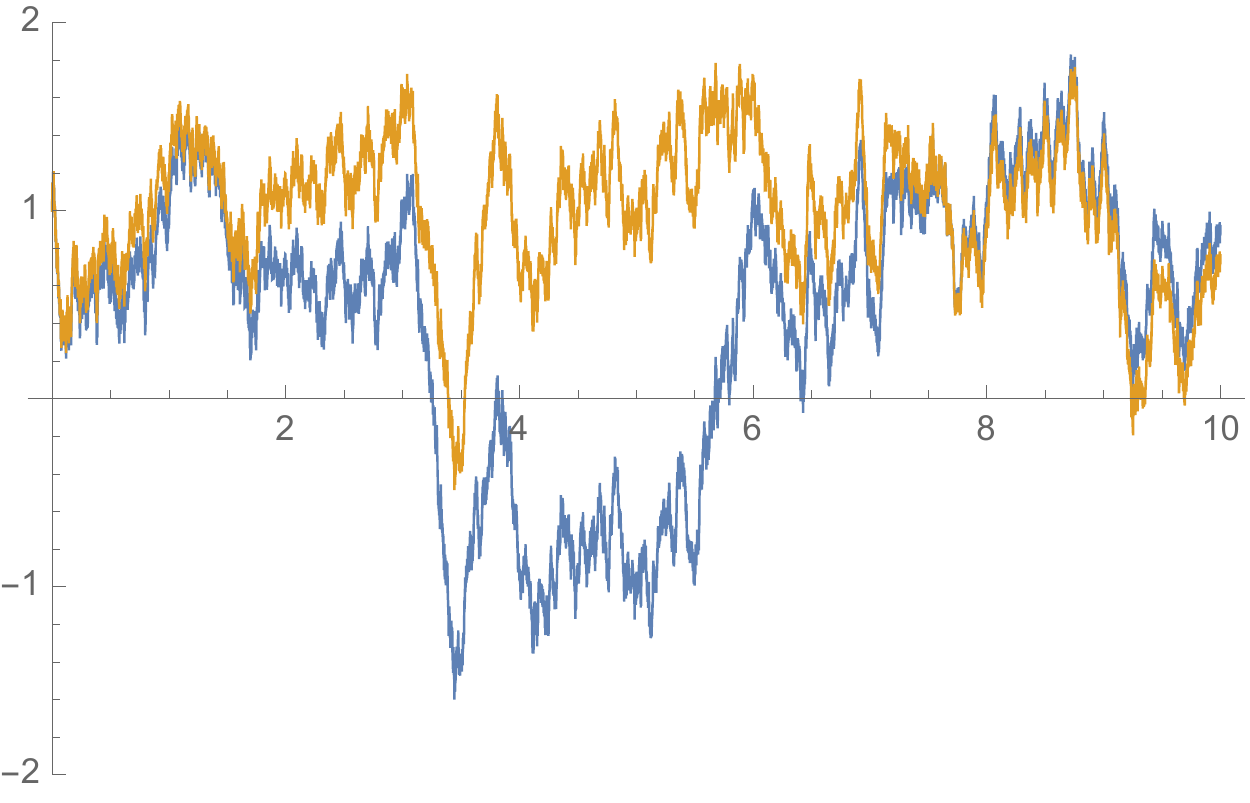}
	};
    \node (fig2) at (3.5,0) {
    \includegraphics[width=.45\textwidth]{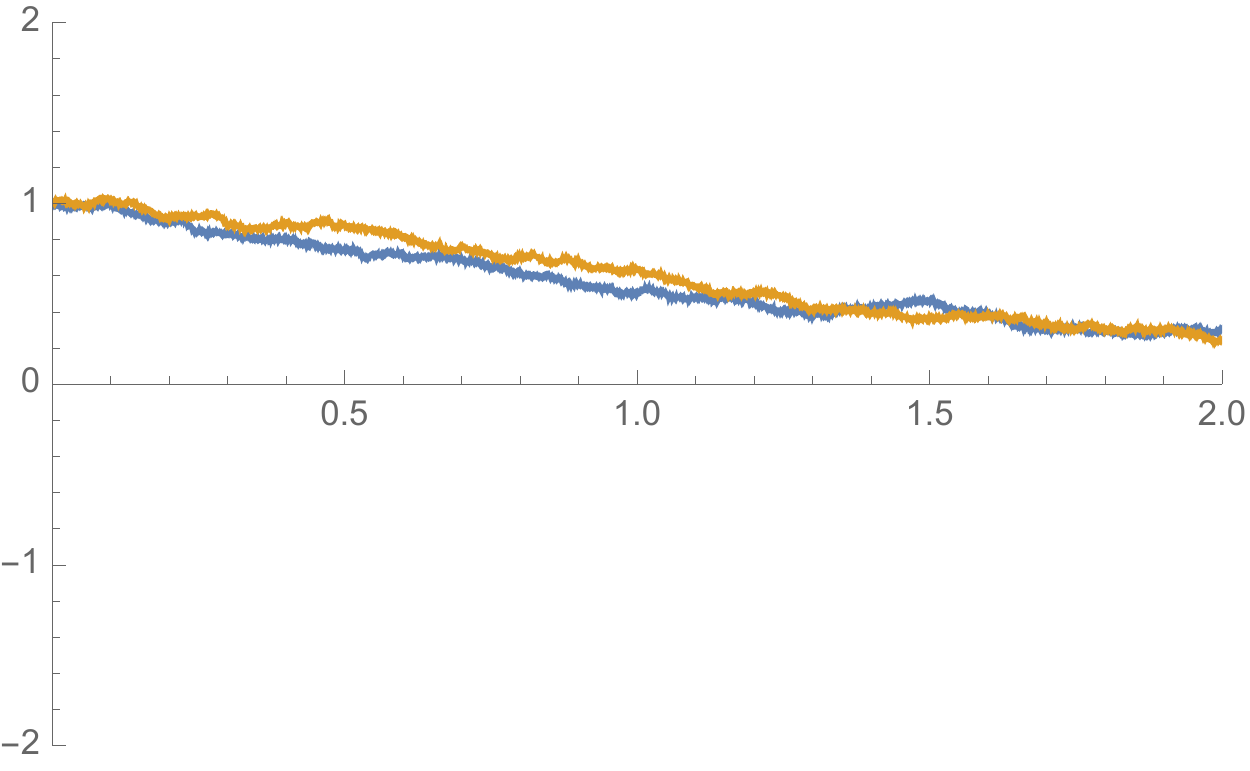}
    };
    \node at (-0.1,0) {$t$};
    \node at (6.9,0) {$t$};
    \end{tikzpicture}
 \vspace{-0.1in}
     \caption{Comparison of the diffusions~\eqref{eq:main_eq} under different
     disorder laws: standard Gaussian $J_{ij}$ (orange) vs.\ Bernoulli $\pm1$ set by $\operatorname{sign}(J_{ij})$ (blue),  with 
 $N=100$ particles, $\beta=1$, $\fs=2$, the double-well potential at $\pm1$ given by 
 $ U_1(x) =  -\log(\fs^2-x^2)-x^2+\tfrac13 x^4$
and a common $N$-dimensional Brownian motion $B_t$ driving the systems. Left: sample path of $X_0^{(I)} X_t^{(I)}$ for a uniform particle $I\in\{1,\ldots,N\}$. Right: average of $100$ samples of $X_0^{(I)} X_t^{(I)}$.}
    \label{fig:sim}
\end{figure}

Let $W_T^\fs$ be the metric space $C([0,T]\to[-\fs,\fs])$ of paths equipped with the distance
\[ d_{2}(x,y) = \bigg( \frac1T \int_0^T |x(t)-y(t)|^2 dt \bigg)^{1/2}\,.\]
Our main result is the a.s.\ convergence of $\mu_N$, in the weak topology corresponding to the metric space $W_T^\fs$, to the self-consistent limit $\mu_\star$ of \cite{BG95,BG98,Guionnet97}.

\begin{theorem}\label{mainthm-1}
Let $\mu_N$ be the empirical measure defined on~\eqref{eq:def-empirical} on sample paths of the Langevin spin glass dynamics~\eqref{eq:main_eq} with independent interactions satisfying~\eqref{eq:J-first-second-mom} and \eqref{eq:J-assump-unif-mom}.
Then, 
for every $\beta>0$, $T<\infty$ and $\fs>0$, we have that a.s.\ 
in the interactions ${\bf J}$ and the diffusion, 
$\mu_N\to \mu_\star$ in $\cM_1(W_T^\fs)$ weakly as $N\to\infty$.
\end{theorem}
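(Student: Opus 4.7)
The plan is to establish a large-deviations principle (\abbr{ldp}) for $\mu_N$ under the assumptions \eqref{eq:J-first-second-mom}--\eqref{eq:J-assump-unif-mom} with the same rate function as in the Gaussian case of \cite{BG95,BG98,Guionnet97}; since $\mu_\star$ is the unique global minimizer of that rate function, the a.s.\ convergence $\mu_N\to\mu_\star$ in $\cM_1(W_T^\fs)$ then follows by Borel--Cantelli provided exponential tightness, a property that comes from the confinement enforced by \eqref{eq:U-assump} together with standard modulus-of-continuity estimates for the driving Brownian motion.

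The first reduction is Girsanov's theorem, representing the law of \eqref{eq:main_eq} as a density with respect to the $\beta=0$ product reference $\P_N^0$:
\[
\frac{d\P_N^\beta}{d\P_N^0}(\mathbf{X}\mid \mathbf{J})=\exp\!\bigg(\frac{\beta}{\sqrt{N}}\sum_{i,j}J_{ij}\!\int_0^T \!X_t^{(j)}\,dB_t^{(i)} - \frac{\beta^2}{2N}\sum_i\int_0^T\!\Big(\sum_j J_{ij}X_t^{(j)}\Big)^2 dt\bigg).
\]
Because $\mu_N$ is a functional of $\mathbf{X}$ alone, Varadhan's lemma reduces the problem to showing that, for every bounded continuous $F:\cM_1(W_T^\fs)\to\R$, the log-partition function
\[
\Phi_N(\mathbf{J})\;:=\;\frac1N\log \E_{\P_N^0}\!\Big[\,e^{N F(\mu_N)}\,\tfrac{d\P_N^\beta}{d\P_N^0}(\mathbf{X}\mid\mathbf{J})\Big]
\]
has the same $N\to\infty$ limit under the general disorder as under an i.i.d.\ standard Gaussian matrix $\mathbf{G}$, for which \cite{BG95,Guionnet97} gives the variational limit $\sup_\mu\{F(\mu)-S(\mu)\}$. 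A concentration bound (e.g.\ a bounded-difference estimate applied row by row to the truncated disorder) would then upgrade this convergence in mean to the a.s.\ statement of the theorem.

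The universality of $\lim \E\,\Phi_N$ I would establish by a Lindeberg-type interpolation over the $N^2$ entries $\{J_{ij}\}$. After truncating each $J_{ij}$ at level $M=(\log N)^C$---an operation whose cost in $\E[e^{N\Phi_N}]$ is $1+o(1)$ by the uniform exponential tail \eqref{eq:J-assump-unif-mom}---one swaps $J_{ij}$ for an independent $G_{ij}\sim\cN(0,1)$ one coordinate at a time and Taylor expands $\Phi_N$ to third order in $J_{ij}$. The matching first two moments \eqref{eq:J-first-second-mom} annihilate the linear and quadratic terms, leaving a per-swap error governed by $|\partial^3_{J_{ij}}\Phi_N|$. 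Rewriting this derivative via the variational structure of $\log\E[e^{(\cdot)}]$ as an iterated cumulant under the tilted Gibbs measure, and exploiting the a priori confinement $\|X^{(i)}_\cdot\|_\infty\le \fs$ together with \abbr{bdg}-type control of the It\^o integrals $\int_0^T X_t^{(j)}\,dB_t^{(i)}$, the target is a bound $|\partial^3_{J_{ij}}\Phi_N|\lesssim N^{-2}\mathrm{poly}(\log N)$ uniformly on the truncated event, so that summing the $N^2$ per-swap errors produces an $o(1)$ total error.

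The main obstacle is precisely the one flagged in Remark~\ref{rem:explain}: the Radon--Nikodym derivative above is exponentially large in $N$, and a naive replacement inside $\E[e^{N\Phi_N}]$ would multiply any polynomially small moment-matching error by $e^{\Theta(N)}$. Staying at the logarithmic level of $\Phi_N$, where derivatives are cumulants whose size reflects fluctuations rather than magnitudes, is the conceptual key; making the $N^{-2}$ bound on $|\partial^3_{J_{ij}}\Phi_N|$ both quantitative and uniform in $\beta$ and $T$---and in particular handling the fact that the drift correction term is quadratic in $\mathbf{J}$ while the stochastic integrals $\int_0^T X_t^{(j)}\,dB_t^{(i)}$ are only sub-Gaussian functionals of $\mathbf{X}$---is where the technical work is expected to concentrate.
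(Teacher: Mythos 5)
Your high-level idea---Lindeberg interpolation over the disorder at a ``logarithmic'' level, comparing to the Gaussian case of \cite{BG95,Guionnet97}, and then reading off the $\mathrm{a.s.}$ limit from the variational problem---is in the right family as the paper's argument, but it omits the device that actually makes the interpolation tractable: the time-discretization to the piecewise-frozen system~\eqref{eq:main_eq_frozen}. That discretization is not cosmetic. It turns the row-$i$ Girsanov exponent into a quadratic form $\langle {\bf b}^{(i)}, {\bf g}^{(i)}\rangle - \tfrac12\|{\bf g}^{(i)}\|_2^2$ in ${\bf J}^{(i)}$, through a \emph{fixed} ($\kappa$-dimensional) linear map, so that Lemma~\ref{lem:lindeberg} applies cleanly \emph{row by row} rather than entry by entry, and---crucially---the ${\bf J}$-averaged RND becomes a functional of the empirical measure $\widetilde\mu_{N,\kappa}$ alone (equation~\eqref{eq:rnd}). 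In your unfrozen Girsanov density the stochastic integrals $\int_0^T X_t^{(j)}\,dB_t^{(i)}$ depend jointly on the pair $(X^{(i)},X^{(j)})$, so the averaged RND is not a functional of $\mu_N$; you are then forced into the Varadhan--Bryc route, which must hold for every $F\in C_b$ and delivers a full LDP---more than the paper either proves or needs (it only establishes the LLN, via the Gaussian LDP upper bound of Lemma~\ref{lem:weak-ld-gauss} transferred through the row-by-row density bound and then coupled back to the original dynamics in Proposition~\ref{thm-coupling}).

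The step you flag as ``where the technical work is expected to concentrate'' is in fact the unresolved core. Two concrete problems: (i) the arithmetic of your target bound does not close---$N^2$ swaps times $N^{-2}\,\mathrm{poly}(\log N)$ times $\E|J_{ij}|^3$ is $\mathrm{poly}(\log N)$, not $o(1)$, so you actually need $o(N^{-2})$ (heuristically $O(N^{-5/2})$); (ii) more seriously, the third-derivative bound requires controlling cumulants of $V=\partial_{J_{ij}}W$ under the \emph{tilted} Gibbs measure $\propto e^{NF(\mu_N)+\text{interaction}}\,d\P_N^0$, whose exponent is $\Theta(N)$. The confinement $\|X^{(i)}\|_\infty\le\fs$ and BDG bounds control $V$ under $\P_N^0$, not under the tilt; a naive change of measure inflates any moment of $V$ by $e^{cN}$, which is exactly the obstruction of Remark~\ref{rem:explain}, and passing to cumulants does not automatically evade it. The paper sidesteps this by bounding the RND ratio directly (Lemma~\ref{lem:compare}), yielding a correction $\Phi_{N,\kappa}=\frac1N\sum_i\log(1+\delta_N^{(i)}e^{M_\kappa^{(i)}})$ whose exponential moments are controllable precisely because $M_\kappa^{(i)}$ is $\chi^2_\kappa/2$ with $\kappa$ fixed; the core analytic effort (Lemmas~\ref{lem:mgf-quadratic} and~\ref{lem:Phi-summable}) lives there, and has no counterpart in your plan.
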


As a corollary we obtain, for instance, that the dynamics with interactions that are, e.g., centered Bernoulli($\frac12$) or centered $\mathrm{Exp}(1)$ random variables (see Figure~\ref{fig:sim}) have the same limit as the one derived
in~\cite{BG95,BG98,Guionnet97} for the standard Gaussian case. 

\begin{remark}\label{rem:non-uniform-vars}
Theorem~\ref{mainthm-1} remains valid when replacing the assumption~\eqref{eq:J-assump-unif-mom} by the following, less explicit, yet somewhat more relaxed 
conditions:
\begin{align}\label{eq:J-assump-mgf}
& \lim_{\epsilon \to 0} \sup_{\substack{i \le N\\ \theta \in (0,\epsilon]}} \frac{1}{ \theta^2 N} 
\sum_{j=1}^N \log 
\big( \E [e^{\theta J_{ij}}] \vee \E[e^{-\theta J_{ij}}] \big) < \infty \,, \\
& \frac{1}{N^\gamma} \sum_{i,j=1}^N \E (|J_{ij}|^3) \to 0  \quad \mbox{for some} \quad 
\gamma<\frac{5}{2} \,,
\label{eq:J-assump-moments}\\
& \varlimsup_{N \to \infty} \{ N^{-1/2} \|{\bf J}\|_{2 \to 2} \} < \infty \quad \hbox{almost surely} \,.
\label{eq:cond-sing-max}
\end{align}
\end{remark}

The approach of~\cite{BG95} is to establish a weak \abbr{ldp} for the empirical law of the dynamics, in an approximate system of equations where interactions are frozen 
over a finite number of sub-intervals (see~\eqref{eq:main_eq_frozen}), under the topology derived 
from sup-norm distance between sample paths. One  then boosts it 
via exponential tightness into a full \abbr{ldp}, where
the extra assumption $\beta^2 \fs^2 T < 1$ for exponential tightness  
in \cite{BG95}, is later dispensed of in~\cite{Guionnet97}. The  \abbr{ldp} further 
extends to the original \abbr{sds}, implying in particular the law of large numbers (\abbr{lln}).
Theorem~\ref{mainthm-1} applies beyond Gaussian disorder,  
albeit for the slightly weaker topology derived from $L^2$-distance. 

\begin{remark}\label{rem:explain}
As demonstrated in Figure~\ref{fig:sim}, even when using the same Brownian motion, the sample path for a typical (random) coordinate of the solutions of~\eqref{eq:main_eq} under two different disorder laws are not close to one another: one must average over the disorder matrix $\mathbf{J}$ in order to establish the similarity of the limiting $\mu_N$. Going this route, any attempt to control the \abbr{rnd} between the average of the
measure $\P_N^\beta$ w.r.t.\ our 
non-Gaussian interaction $\mathbf{J}$ and the average of such 
a measure 
with Gaussian interactions~$\widehat{\mathbf{J}}$ requires one to estimate a term of the form $\E_{\mathbf J}[e^F]/\E_{\widehat {\mathbf J}}[e^G]$ conditioned on the sample paths and Brownian motions. The analysis of this \abbr{rnd} becomes particularly delicate since, even upon establishing that $\E_{\mathbf{J},\widehat{\mathbf J}}[e^F - e^G] \leq (1+N^{-c} \, \Xi)^N$, 
we must control the effect of the random variable $\Xi$ in order to deduce that the overall ratio is 
$\exp(o(N))$.
\end{remark}

\begin{remark}\label{rem:neural}
Various extensions of the model studied here appeared in the context of disordered neural networks. For instance, in~\cite{CT18a,CT18b} (also see~\cite{CT13}) the model allows time delays in the interaction between the particles, a time-dependent self-interaction, and any bounded Lipschitz-continuous pairwise interaction (which in the setup of~\cite{BG95,BG98,Guionnet97} was a bi-linear map). In studies of networks of Hopefield neurons, e.g.~\cite{FMT19} and the references therein, the evolution of $X_t^{(i)}$ has interaction terms $J_{ij}$ as pre-factors of a nonlinear uniformly bounded function of the $X_t^{(j)}$'s, in lieu of a confining potential $U$. Both of these lines of extensions were studied under the assumption that the interactions variables $J_{ij}$ are Gaussian, and it would be interesting to analyze them under general interactions. It is plausible that our methods here would be useful to that end.
\end{remark}

In Section \ref{sec:main-prf} we describe the \abbr{sds} of piecewise 
frozen interactions and establish that Theorem~\ref{mainthm-1} is a direct 
consequence of Propositions~\ref{thm-disc} and~\ref{thm-coupling}. 
Thereafter, in Section~\ref{sec:frozen} we establish 
Proposition~\ref{thm-disc}, namely the relevant \abbr{lln} for 
the approximating \abbr{sds}, whereas in Section~\ref{sec:couple} 
we prove Proposition~\ref{thm-coupling}, which couples the 
approximating \abbr{sds} to 
the (original) dynamics~\eqref{eq:main_eq} of interest.

\section{Proof of Theorem \ref{mainthm-1}:
piecewise frozen interactions}\label{sec:main-prf}

We start by showing that the conditions in Remark~\ref{rem:non-uniform-vars} indeed relax~\eqref{eq:J-first-second-mom}--\eqref{eq:J-assump-unif-mom}.
\begin{lemma}\label{prop:disorder}
Conditions~\eqref{eq:J-first-second-mom} and~\eqref{eq:J-assump-unif-mom} 
imply the conditions \eqref{eq:J-assump-mgf}--\eqref{eq:cond-sing-max}.
\end{lemma}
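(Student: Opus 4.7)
The plan is to verify the three target conditions one at a time, each deduced from the uniform exponential-moment bound \eqref{eq:J-assump-unif-mom} together with the normalizations in \eqref{eq:J-first-second-mom}. Throughout, let $\epsilon_0>0$ and $M<\infty$ be such that $\E[e^{\epsilon_0|J_{ij}|}]\le M$ for all $i,j,N$, which exist by \eqref{eq:J-assump-unif-mom}; from this, Markov's inequality gives $\E[|J_{ij}|^k]\le M\,k!\,\epsilon_0^{-k}$ for every integer $k\ge 1$, uniformly in $i,j,N$.

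For \eqref{eq:J-assump-mgf}, I would Taylor-expand the MGF. For $|\theta|\le \epsilon_0/2$,
\[
\E[e^{\theta J_{ij}}] \;=\; 1 + \theta\,\E[J_{ij}] + \frac{\theta^2}{2}\E[J_{ij}^2] + \sum_{k\ge 3}\frac{\theta^k}{k!}\E[J_{ij}^k] \;=\; 1+\frac{\theta^2}{2} + R(\theta),
\]
where, by the moment bound above, $|R(\theta)|\le M\sum_{k\ge 3}(|\theta|/\epsilon_0)^k \le C_0\,\theta^3$ for a constant $C_0=C_0(M,\epsilon_0)$. Hence $\log \E[e^{\theta J_{ij}}]\le \theta^2/2+C_0\theta^3 \le C_1\,\theta^2$ for $|\theta|\le \epsilon_0/2$, and the same holds for $-\theta$. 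Summing over $j$ and dividing by $\theta^2 N$ yields a constant upper bound uniform in $i, N$ and $\theta\in(0,\epsilon_0/2]$, which in particular implies the limsup in \eqref{eq:J-assump-mgf} is finite.

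For \eqref{eq:J-assump-moments}, the moment bound $\E[|J_{ij}|^3]\le 6M/\epsilon_0^3$ holds uniformly, so $\sum_{i,j=1}^N \E[|J_{ij}|^3] \le 6M\epsilon_0^{-3}\,N^2$, and the ratio with $N^\gamma$ tends to $0$ for any $\gamma\in(2,5/2)$.

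The main obstacle is \eqref{eq:cond-sing-max}, which requires the almost-sure operator-norm bound $\varlimsup_N N^{-1/2}\|\mathbf{J}\|_{2\to 2}<\infty$. Here I would invoke a non-asymptotic matrix concentration result for rectangular matrices with independent, mean-zero, uniformly sub-exponential entries. Using the dual formulation $\|\mathbf{J}\|_{2\to 2}=\sup_{u,v\in S^{N-1}} u^\top \mathbf{J} v$ together with a standard $\tfrac14$-net argument (the net on $S^{N-1}$ having cardinality $\le 9^N$), the problem reduces to a union bound over single quadratic forms $u^\top\mathbf{J}v=\sum_{i,j}u_iv_j J_{ij}$. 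For each fixed unit $u,v$, the Bernstein inequality for independent sub-exponential summands, controlled via \eqref{eq:J-assump-unif-mom} and $\sum_{i,j} u_i^2 v_j^2 = 1$, yields $\mathbb{P}(|u^\top \mathbf{J} v|\ge K\sqrt{N})\le 2\exp(-c K^2 N)$ for $K$ large enough (with $c$ depending only on $M,\epsilon_0$). Taking the union bound over the $\tfrac14$-net and choosing $K$ large enough that $cK^2 > 2\log 9$, one gets $\mathbb{P}(\|\mathbf{J}\|_{2\to 2}\ge 2K\sqrt{N})\le e^{-c'N}$, and the Borel--Cantelli lemma concludes that $\varlimsup_N N^{-1/2}\|\mathbf{J}\|_{2\to 2}\le 2K$ almost surely, establishing \eqref{eq:cond-sing-max}. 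The delicate point is merely confirming that the Bernstein inequality applies with constants uniform in $i,j,N$, which is precisely what \eqref{eq:J-assump-unif-mom} provides.
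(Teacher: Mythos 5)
Your verifications of \eqref{eq:J-assump-mgf} and \eqref{eq:J-assump-moments} are correct. They take a slightly different route from the paper --- you expand the moment generating function and bound the tail of the Taylor series via $\E|J_{ij}|^k\le Mk!\,\epsilon_0^{-k}$, whereas the paper applies the pointwise inequalities $e^{\theta J}-\theta J\le 1+\frac{\theta^2}{\epsilon^2}e^{\epsilon|J|}$ and $|J|^3\le\frac{6}{\epsilon^3}e^{\epsilon|J|}$ directly before taking expectations --- but the content is the same and both arguments are valid.

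The proof of \eqref{eq:cond-sing-max}, however, has a genuine gap. You assert that Bernstein's inequality gives
$\P\big(|u^\top\mathbf{J}v|\ge K\sqrt N\big)\le 2\exp(-cK^2N)$
for every pair of unit vectors $u,v$, but under a sub-\emph{exponential} moment assumption Bernstein yields only
\[
\P\Big(\Big|\sum_{i,j}u_iv_j J_{ij}\Big|\ge K\sqrt N\Big)\le 2\exp\Big(-c\min\Big(K^2N,\ \frac{K\sqrt N}{\|u\|_\infty\|v\|_\infty}\Big)\Big),
\]
so the purely quadratic rate $\exp(-cK^2N)$ is available only when $\|u\|_\infty\|v\|_\infty\lesssim 1/\sqrt N$. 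Your net on the sphere necessarily contains points arbitrarily close to coordinate vectors, for which $\|u\|_\infty\|v\|_\infty$ is of order $1$ and the bound degrades to $\exp(-cK\sqrt N)$; indeed for $u=e_i$, $v=e_j$ the event is just $\{|J_{ij}|\ge K\sqrt N\}$, whose probability is genuinely of that order under \eqref{eq:J-assump-unif-mom}. Since $81^N\exp(-cK\sqrt N)\to\infty$, the union bound over the net does not close, no matter how large $K$ is taken. (The naive net argument is a sub-Gaussian argument; it does not transfer to sub-exponential entries without additional decomposition by the localization of $u,v$, or truncation.) The paper avoids this by embedding $\mathbf{A}$ as the off-diagonal block of a $2N\times 2N$ symmetric Wigner-type matrix and invoking the F\"uredi--Koml\'os trace/moment method, which is designed to handle exactly this sub-exponential setting and delivers the almost-sure operator-norm bound directly.
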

\begin{proof} Taking the expectation of 
\[
e^{\theta J_{ij}} - \theta J_{ij} \le 1 + \frac{\theta^2}{\epsilon^2} e^{\epsilon |J_{ij}|} \,, 
\quad \forall |\theta| \le \epsilon 
\]
w.r.t.\ the zero-mean law of 
$J_{ij}$, followed by 
the logarithm of both sides, as $\log (1+y) \le y$ on $\R_+$ it follows 
that 
\[
\log \E [ e^{\theta J_{ij}} ] \le \frac{\theta^2}{\epsilon^2} \E[e^{\epsilon |J_{ij}|}] 
\,, \quad \forall |\theta| \le \epsilon, i,j,N \,.
\]
Thereby, \eqref{eq:J-assump-mgf} follows from \eqref{eq:J-assump-unif-mom}. 
Similarly, with $|J|^3 \le \frac{6}{\epsilon^3} e^{\epsilon |J|}$, upon 
taking the expectation of both sides w.r.t.\ the law of $J_{ij}$, we get \eqref{eq:J-assump-moments}
(for $\frac{5}{2} > \gamma > 2$). As for \eqref{eq:cond-sing-max}, 
let ${\bf A} := \frac{\beta}{\sqrt{N}} {\bf J}$ 
denote the scaled disorder matrix and 
${\bf Z}_1$, ${\bf Z}_2$ be two $N$-dimensional 
symmetric matrices, which are independent of ${\bf A}$ and 
of each other, with independent entries above and on their
main diagonal satisfying both \eqref{eq:J-first-second-mom} and 
\eqref{eq:J-assump-unif-mom}. Then, for non-random $\gamma \in \R$ 
consider the $2N$-dimensional symmetric matrices
\[
{\bf W}_\gamma := \begin{pmatrix} 
\frac{\gamma}{\sqrt{N}} {\bf Z}_1 & {\bf A}\\
{\bf A}^\Tr& \frac{\gamma}{\sqrt{N}} {\bf Z}_2
\end{pmatrix}\,,
\]
noting that 
\[
\| {\bf A}\|_{2 \to 2} = \lambda_{\max}({\bf W}_0) \le 
\lambda_{\max}({\bf W}_\beta) - \beta 
\lambda_{\min} (N^{-1/2} {\bf Z}_1) - \beta \lambda_{\min}(N^{-1/2} {\bf Z}_2) \,.
\]
But
${\bf W}_\beta$ is a $\sqrt{2}\beta$ multiple of an $2N$-dimensional 
Wigner matrix while $N^{-1/2} {\bf Z}_i$ for $i=1,2$, are 
a pair of $N$-dimensional Wigner matrices.
The F\"{u}redi-Koml\'{o}s~\cite{FK81} argument applies to each of these three
matrices, yielding that 
$\varlimsup_{N\to\infty} \{\lambda_{\max}({\bf W}_\beta) \} \leq 2 \sqrt{2} \beta$ and 
$\varliminf_{N\to\infty} \{ \lambda_{\min} (N^{-1/2} {\bf Z}_i) \} \geq - 2$ for $i=1,2$. 
This completes the proof.\footnote{The result of~\cite{FK81} is for matrices of bounded entries and convergence in probability; the proof remains valid under the condition of uniform boundedness of exponential moments of the entries of~$\sqrt{N}A$: see
the detailed exposition in
\cite[Sec.~2.1.6]{AGZ} for the case of i.i.d.\ entries. The extension to
non-identically distributed entries and a.s.\ convergence, respectively, is immediate (see~\cite[Ex.~2.1.27 and~2.1.29]{AGZ}).}
\end{proof}

A key ingredient in our proof is the analysis of the approximate dynamics of~\cite[\S3]{BG95}, now
for a general disorder $\{J_{ij}\}$. Specifically, fixing an integer $\kappa$, let 
\[\mathfrak t_k = k T/\kappa \qquad\mbox{ for $k=0,\ldots,\kappa$}\,,\]
partitioning the interval 
$[0,T)$ into $\kappa$ disjoint sub-intervals $ [\ft_{k-1},\ft_k)$. We denote by 
$\widetilde{\P}_{N,\kappa}^\beta$ the probability measure of the triplet $({\bf J},B_\cdot,\widetilde{X}_\cdot)$ corresponding to the diffusion $\widetilde{X}_t$ starting from $\widetilde{X}_0=X_0$ and given by
\begin{equation}\label{eq:main_eq_frozen}
 d \widetilde X_t  = d B_t - \nabla U(\widetilde X_t) dt + \frac{\beta}{\sqrt{N}} {\bf J} \widetilde X_{\ft_{k-1}} dt\qquad (t\in[\ft_{k-1},\ft_{k}],\, 1\leq k\leq\kappa)\,,
\end{equation}
i.e., the interaction term between the particles is frozen along each sub-interval $[\mathfrak t_{k-1},\mathfrak t_k)$.
(See Figure~\ref{fig:disc} for a simulation of the approximate dynamics.)

\begin{figure}
    \centering
    \begin{tikzpicture}
    \node (fig1) at (-3.5,0) {
	\includegraphics[width=.65\textwidth]{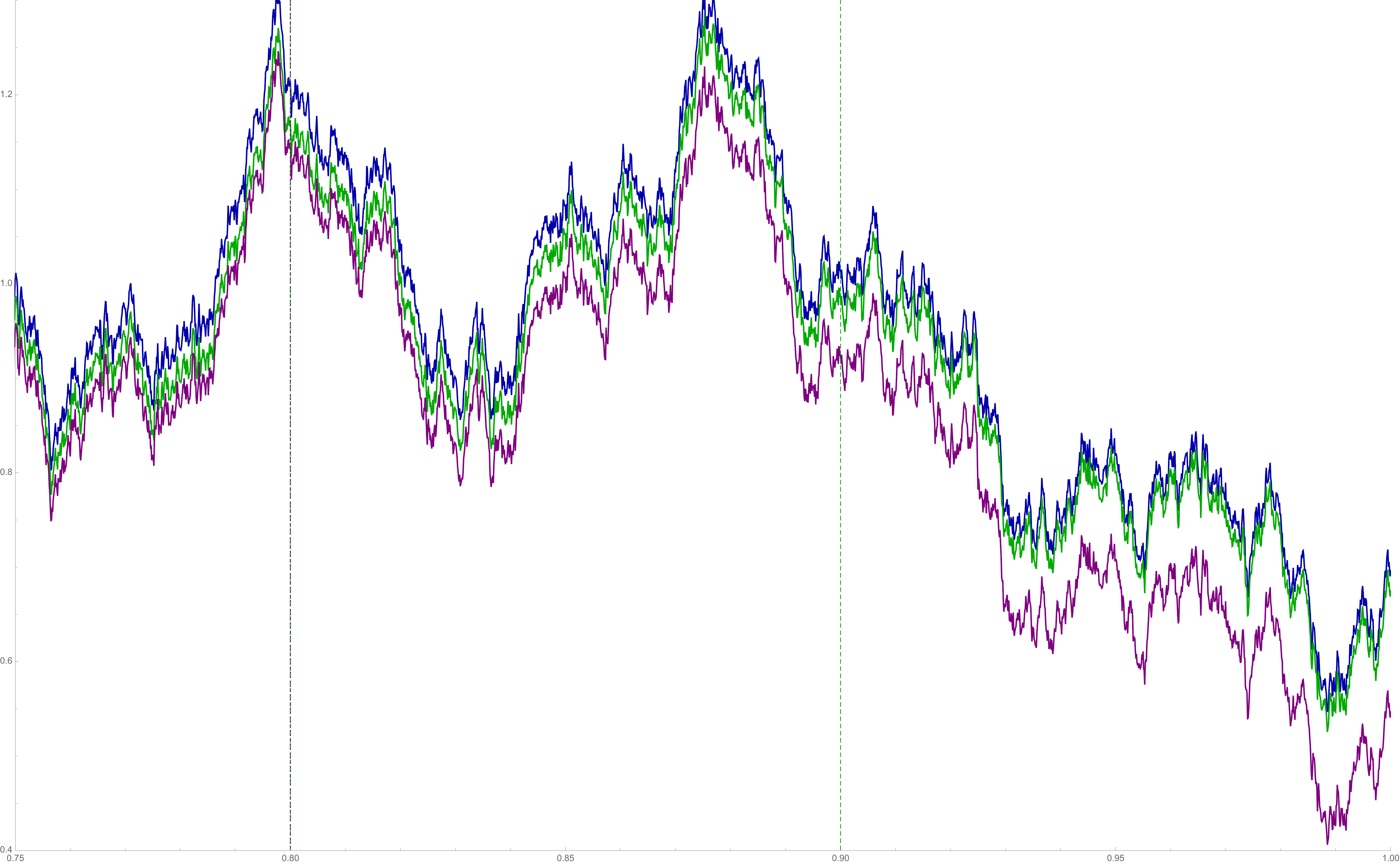}
	};
    \end{tikzpicture}
 \vspace{-0.1in}
     \caption{Solution of the diffusion~\eqref{eq:main_eq} with Bernoulli $\pm1$ disorder variables,
  $N=100$ and $\beta=1$ (blue) vs.\ the approximating system~\eqref{eq:main_eq_frozen} with subinterval length $0.1$ (green) and subinterval length $0.2$ (purple).}
    \label{fig:disc}
\end{figure}

The fact that both \eqref{eq:main_eq_frozen} and the original 
diffusion~\eqref{eq:main_eq} have unique weak solutions, 
follows from~\cite[Proposition 2.1]{BG95}, which established this fact for every $(J_{ij})$. Furthermore, this solution is in fact strong (see, e.g.,~\cite[exercises~(2.10)($1^\circ$) and~(2.15)($2^\circ$) in p.~383 and p.~386]{RY99}).

Next, per $a_2$ finite, denote by $\P_{N}^{\beta,a_2}$ 
the measure $\P_N^\beta$ restricted to the event 
\begin{equation}\label{eq:Astar}
\cA_{a_2} :=  \{ \|{\bf A}\|_{2\to 2}\leq a_2 
\} \,.
\end{equation}
We further use $\Pi_N^\beta$ for the averaged over ${\bf A}$ law of the
empirical measure $\mu_N$, with $\Pi_N^{\beta,a_2}$ similarly standing for 
the sub-probability measure in which the expectation over ${\bf A}$ 
is restricted to the event $\cA_{a_2}$.
In analogy with~\eqref{eq:def-empirical}, let $\widetilde\mu_{N,\kappa}$ be the empirical measure of the solution to~\eqref{eq:main_eq_frozen}, with $\widetilde\Pi_{N,\kappa}^{\beta,a_2}$ denoting its law 
integrated over the disorder restricted to $\cA_{a_2}$.

Recall that $W_T^\fs$ is the metric space $C([0,T]\to[-\fs,\fs])$ equipped with 
the distance
\[ d_{2}(x,y) = \bigg( \frac1T \int_0^T |x(t)-y(t)|^2 dt \bigg)^{1/2}\,.\]
We further equip the space $\cM_1(W_T^\fs)$ 
with the corresponding Wasserstein metric
\[ d_{W_2}(\phi,\psi) := \inf_{\substack{\xi=(\xi_1,\xi_2)\\ \xi_1=\phi,\xi_2=\psi}} \bigg\{ \int d_{2}(x,y)^2 d \xi(x,y)\bigg\}^{1/2}\,,
\]
denoting hereafter by $\B(\mu_\star,\delta)$ the 
ball of radius $\delta$ around $\mu_\star$ in that metric.
\begin{proposition}\label{thm-disc}
Suppose \eqref{eq:J-first-second-mom},
\eqref{eq:J-assump-mgf} and \eqref{eq:J-assump-moments} hold.
Then, for every $T,a_2<\infty$ and $\delta>0$ there exists some $\kappa_0<\infty$ such that for every~$\kappa\geq \kappa_0$,
\[ \sum_{N \geq 1}^\infty \widetilde\Pi_{N,\kappa}^{\beta,a_2} (\B (\mu_\star,\delta)^c) < \infty \,.
\]
\end{proposition}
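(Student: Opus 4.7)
The plan is to compare the averaged ${\bf J}$-law of the empirical measure for the frozen system~\eqref{eq:main_eq_frozen} under general disorder to that of its Gaussian counterpart (same equation with ${\bf J}$ replaced by i.i.d.\ standard Gaussians), for which concentration around the frozen minimizer $\mu_\star^\kappa$ follows from the explicit Gaussian calculus of~\cite{BG95}. The key structural point is that the frozen drift $\tfrac{\beta}{\sqrt N}{\bf J}\widetilde X_{\ft_{k-1}}$ is piecewise constant, so Girsanov's theorem against the $\beta=0$ reference measure $\P^0_N$ (product of $N$ independent diffusions $Y^{(i)}$ with potential $U_1$ driven by $B^{(i)}$) yields $d\widetilde\P_{N,\kappa}^\beta/d(\P^0_N\otimes\P_{\bf J})=\exp L_N(Y,{\bf J})$, with $L_N=\sum_i L_i$ and
\[
L_i = \sum_{k=1}^\kappa \Big[D_k^{(i)} w_k^{(i)} - \tfrac12(\ft_k-\ft_{k-1})(D_k^{(i)})^2\Big],\quad D_k^{(i)}=\tfrac{\beta}{\sqrt N}\sum_j J_{ij} Y^{(j)}_{\ft_{k-1}},
\]
where $w_k^{(i)}=Y^{(i)}_{\ft_k}-Y^{(i)}_{\ft_{k-1}}+\int_{\ft_{k-1}}^{\ft_k} U_1'(Y_s^{(i)})\,ds$. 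Since the rows of ${\bf J}$ are independent (asymmetric disorder), $\E_{\bf J}[e^{L_N}|Y]=\prod_i \E_{J_{i\cdot}}[e^{L_i}|Y]$.

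To process each row, apply Hubbard--Stratonovich $e^{-a^2/2}=\E_{Z\sim N(0,1)}[e^{iaZ}]$ to the $\kappa$ quadratic-in-$J$ terms, linearizing in $J_{i\cdot}$ at the cost of a $\kappa$-dimensional Gaussian integration:
\[
\E_{J_{i\cdot}}[e^{L_i}|Y] = \E_{Z^{(i)}}\Big[\prod_{j=1}^N \E_{J_{ij}}[e^{c_j^{(i)}(Y,Z^{(i)})J_{ij}}]\Big],
\]
with $Z^{(i)}\sim N(0,I_\kappa)$ independent across $i$ and $c_j^{(i)}(Y,Z^{(i)}) = \tfrac{\beta}{\sqrt N}\sum_k Y^{(j)}_{\ft_{k-1}}(w_k^{(i)}+i\sqrt{\ft_k-\ft_{k-1}}Z_k^{(i)})$, of order $N^{-1/2}$ when $Y$ and $Z^{(i)}$ stay bounded. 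For Gaussian disorder, $\log\E[e^{cJ^{\mathrm G}_{ij}}]=c^2/2$ exactly, while \eqref{eq:J-assump-mgf} gives $\log\E[e^{cJ_{ij}}]=c^2/2 + O(|c|^3\E|J_{ij}|^3)$ uniformly in $|c|\le\epsilon$. Hence the quadratic contributions match the Gaussian case identically, and the aggregated cubic error across all $i,j$ is at most $CN^{-3/2}(1+\max_{i,k}|Z_k^{(i)}|)^3 \sum_{i,j}\E|J_{ij}|^3$, which is $o(N)$ by~\eqref{eq:J-assump-moments} on any event where $\max_{i,k}|Z_k^{(i)}|$ is sub-polynomial.

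The main obstacle, flagged in Remark~\ref{rem:explain}, is propagating this pointwise estimate through the $\E_Z$-average over the $N\kappa$-dimensional Gaussian $(Z^{(i)})_i$. The plan is to exploit the built-in Gaussian damping of the integrand: $|\prod_j \E[e^{c_j^{(i)}J_{ij}}]|$ is controlled by $\exp\tfrac12\sum_j[(\mathrm{Re}\,c_j^{(i)})^2-(\mathrm{Im}\,c_j^{(i)})^2]$ plus $o(1)$, and $\mathrm{Im}\,c_j^{(i)}$ grows linearly in $Z^{(i)}$, so that large $|Z^{(i)}|$ is already suppressed at the integrand level (saddle-point heuristic). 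Concretely, a soft truncation $\max_k|Z_k^{(i)}|\le N^\alpha$ with a small $\alpha>0$ (chosen so that $\gamma-\tfrac32+3\alpha<1$) keeps the cumulant expansion valid by \eqref{eq:J-assump-mgf} and yields an $o(N)$ log-correction; the complementary tail is absorbed using the Gaussian decay together with the deterministic bound $|L_N|\le O(N)$ valid on $\cA_{a_2}$ (inherited from $\|{\bf A}\|_{2\to 2}\le a_2$ and $|Y^{(j)}|\le\fs$, forcing $\sum_i(D^{(i)}_k)^2\le a_2^2\fs^2 N$ for every $k$). An analogous truncation on the $Y$-side, restricting the empirical overlap $\tfrac1N\sum_j Y^{(j)}_{\ft_{k-1}}Y^{(j)}_{\ft_{l-1}}$ to an $\epsilon$-neighborhood of its deterministic limit, discards a $\P^0_N$-event of probability $e^{-\Theta(N)}$ and is handled by the same crude bound.

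With the resulting comparison $\log(\Phi_N/\Phi_N^{\mathrm G})(Y) = o(N)$ on a $\P^0_N$-typical set of $Y$ (where $\Phi_N(Y):=\E_{\bf J}[e^{L_N}\mathbf{1}_{\cA_{a_2}}|Y]$) in hand, one obtains, for every Borel $E\subset\cM_1(W_T^\fs)$,
\[
\widetilde\Pi^{\beta,a_2}_{N,\kappa}(E)\ \le\ e^{o(N)}\widetilde\Pi^{\beta,\mathrm G,a_2}_{N,\kappa}(E) + e^{-\Theta(N)}.
\]
Applied with $E=\B(\mu_\star^\kappa,\delta/2)^c$, the explicit Gaussian concentration of~\cite{BG95} around the unique minimizer $\mu_\star^\kappa$ of the frozen rate function furnishes the summable bound $e^{-c(\delta,\kappa)N}$. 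Finally, the Gaussian result of~\cite{BG95,Guionnet97} also provides $\mu_\star^\kappa\to\mu_\star$ in $d_{W_2}$ as $\kappa\to\infty$; choosing $\kappa_0(\delta)$ so that $d_{W_2}(\mu_\star^\kappa,\mu_\star)<\delta/2$ for all $\kappa\ge\kappa_0$, the triangle inequality gives $\B(\mu_\star,\delta)^c\subseteq\B(\mu_\star^\kappa,\delta/2)^c$ and the proof is complete.
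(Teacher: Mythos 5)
Your skeleton is the right one and matches the paper's: use Girsanov against $\P^0_N$ to get the Radon--Nykodim derivative, integrate out the independent rows of $\bf J$, compare the resulting row-by-row MGF with its Gaussian counterpart to get an $e^{o(N)}$ multiplicative correction plus an exponentially small remainder, and then invoke the Gaussian LDP upper bound from~\cite{BG95} for the frozen system. The key technical device is different, however, and the differences create gaps.

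Where the paper uses Chatterjee's multivariate Lindeberg method applied directly to $z\mapsto e^{-h(z)}$ with $h(z)=\frac12\|\mathbf{X}z-\mathbf{b}\|_2^2$ (Lemma~\ref{lem:lindeberg}), you linearize the quadratic-in-$\bf J$ piece via Hubbard--Stratonovich and run a complex cumulant expansion. The Lindeberg route is much cleaner precisely because $e^{-h}$ is bounded with third partials satisfying the uniform bound $|(e^{-h})_{jjj}|\le 6c_0\,h_{jj}^{3/2}$ and $h_{jj}=(\mathbf{X}^{\Tr}\mathbf{X})_{jj}\lesssim N^{-1}$; this delivers the per-row estimate $\E[e^{-h^{(i)}(\mathbf{J}^{(i)})}]-\E[e^{-h^{(i)}(\widehat{\mathbf J})}]\le \delta_N^{(i)}e^{M_\kappa^{(i)}}\,\E[e^{-h^{(i)}(\widehat{\mathbf J})}]$ with no auxiliary complex $Z$-integral to control. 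Your route must still control both the $Z$-tail and the regime where the cumulant expansion is valid, and here your argument has genuine holes. First, the claimed deterministic bound $|L_N|\le O(N)$ on $\cA_{a_2}$ is false: on $\cA_{a_2}$ the quadratic-in-$J$ term $\sum_{i,k}(\ft_k-\ft_{k-1})(D_k^{(i)})^2/2$ is indeed $O(N)$, but the bilinear term $\sum_{i,k}D_k^{(i)}w_k^{(i)}$ involves the Brownian increments $w_k^{(i)}$ of the $\beta=0$ reference diffusion, which are unbounded; by Cauchy--Schwarz it is controlled by $\big(\sum_i M_\kappa^{(i)}\big)^{1/2}\cdot O(\sqrt N)$ and $\sum_i M_\kappa^{(i)}$ can exceed any $CN$ with probability $e^{-c(C)N}$ but not deterministically. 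So the ``crude bound'' you rely on for the $Z$-tail and the $Y$-exceptional set does not hold as stated. Second, your $Y$-side truncation targets the empirical overlap $\frac1N\sum_j Y^{(j)}_{\ft_{k-1}}Y^{(j)}_{\ft_{l-1}}$, i.e.\ $\mathbf{XX}^{\Tr}$, but the quantity that actually threatens the comparison is the per-row factor $e^{M_\kappa^{(i)}}$ multiplying the small $\delta_N^{(i)}$; the real work is to show $\frac1N\sum_i\log\big(1+\delta_N^{(i)}e^{M_\kappa^{(i)}}\big)\to 0$ summably, which is exactly the content of the paper's Lemma~\ref{lem:Phi-summable}. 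That lemma decomposes $M_\kappa^{(i)}$ into the $\bf J$-independent $\widehat M^{(i)}_\kappa$ (i.i.d.\ $\chi^2_\kappa/2$ under $\widetilde\P^\beta_{N,\kappa}$) and the $\bf J$-dependent $\|{\bf g}^{(i)}\|_2^2$, and uses a change of measure on a sparse set $\cS$ plus a uniform quadratic-form MGF bound (Lemma~\ref{lem:mgf-quadratic}, which is where assumption~\eqref{eq:J-assump-mgf} enters) to control the latter. Your plan has no analogue of this step, and without it the $o(N)$ log-correction is not justified. Finally, a smaller point: to use~\cite{BG95}'s Gaussian LDP upper bound on the closed set $\B(\mu_\star,\delta)^c$ one needs exponential tightness of the frozen Gaussian system (the paper's Lemma~\ref{lem:tight}); you invoke ``explicit Gaussian concentration'' without making this step explicit.

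In short: the high-level strategy is correct and identical to the paper's, but the HS-plus-cumulant route, as you wrote it, leaves the hardest step (aggregating the per-row errors, i.e.\ the issue of Remark~\ref{rem:explain}) unresolved, and the ancillary crude bounds you propose for the tail regions do not hold. The Lindeberg substitution applied to $e^{-h}$ is the paper's key idea that makes this step tractable; your HS approach would need a replacement for Lemma~\ref{lem:Phi-summable} rather than a deterministic bound on $L_N$.
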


Next, let $\Q^{\beta,a_2}_{N}$ denote the joint law of $\bf J$, 
$\widetilde{X}_t$ and $X_t$, restricted to $\cA_{a_2}$,
where we use the same $N$-dimensional Brownian motion $B_t$ for both processes.

\begin{proposition}\label{thm-coupling}
Suppose \eqref{eq:J-first-second-mom}, \eqref{eq:J-assump-mgf} and 
\eqref{eq:J-assump-moments} hold. Then, for every $T, a_2<\infty$ 
and $\delta>0$, there exists some $\kappa_0<\infty$ such that for every $\kappa\geq \kappa_0$,
\[ \sum_{N\geq 1}^\infty \Q^{\beta,a_2}_N\bigg( \frac1{NT}\int_0^T\|X_t- \widetilde X_t\|^2_{2}dt> \delta \bigg) < \infty\,.\]
\end{proposition}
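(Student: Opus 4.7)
Since the two diffusions share the same Brownian motion and initial datum, the stochastic integrals cancel and $D_t := X_t - \widetilde X_t$ is almost surely absolutely continuous, with
\[
\dot D_t \;=\; -\bigl[\nabla U(X_t) - \nabla U(\widetilde X_t)\bigr] + \mathbf{A}\,D_t + \mathbf{A}\,\zeta_t,
\]
where $\mathbf{A} = \beta N^{-1/2}\mathbf{J}$ and $\zeta_t := \widetilde X_t - \widetilde X_{\ft_{k(t)-1}}$ is the ``freeze-error'' on the sub-interval containing $t$. Taking the inner product with $D_t$ and using (i) the semi-convexity $\nabla^2 U \geq -L I$ on $(-\fs,\fs)^N$ (the log-type barrier in $U_1$ is convex, and any smooth correction has bounded Hessian on the compact range occupied by $X$ and $\widetilde X$), together with (ii) the bound $\|\mathbf{A}\|_{2\to 2} \leq a_2$ valid on $\cA_{a_2}$, one obtains the scalar inequality $\tfrac{d}{dt}\|D_t\|^2 \leq C\|D_t\|^2 + a_2^2 \|\zeta_t\|^2$ with $C = C(L,a_2)$. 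Since $D_0 = 0$, Gronwall yields
\[
\int_0^T \|D_t\|^2\, dt \;\leq\; C_1(L,a_2,T)\,\int_0^T \|\zeta_t\|^2\, dt \qquad \text{a.s.\ on } \cA_{a_2},
\]
so it suffices to bound the right-hand side by $\delta N T$ except on an event of summable-in-$N$ probability under $\Q_N^{\beta,a_2}$, for $\kappa$ large.

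On each $[\ft_{k-1},\ft_k]$, \eqref{eq:main_eq_frozen} expresses $\zeta_t$ as the Brownian increment $B_t - B_{\ft_{k-1}}$ plus the time-integral of $-\nabla U(\widetilde X_s) + \mathbf{A}\widetilde X_{\ft_{k-1}}$. Cauchy--Schwarz and summation over sub-intervals give
\[
\int_0^T \!\|\zeta_t\|^2 dt \;\leq\; 2\!\int_0^T \!\!\|B_t - B_{\ft_{k(t)-1}}\|^2 dt \;+\; \frac{2T^2}{\kappa^2}\!\int_0^T\!\!\Bigl(\|\nabla U(\widetilde X_s)\|^2 + \|\mathbf{A}\widetilde X_{\ft_{k(s)-1}}\|^2\Bigr) ds.
\]
The interaction term is a.s.\ at most $2T^3 a_2^2 \fs^2 N/\kappa^2$, below $\delta N T/4$ for large $\kappa$. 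The Brownian term is, after expanding coordinate-wise, a sum of $N\kappa$ i.i.d.\ squared-Gaussian-path integrals, each of expectation $T^2/(2\kappa^2)$; standard Chernoff/Hanson--Wright bounds therefore give deviations of order $\delta N T$ with probability $\exp(-c(\kappa,\delta) N)$, summable in $N$ for $\kappa$ sufficiently large.

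The remaining and principal obstacle is an a priori estimate $\int_0^T \|\nabla U(\widetilde X_s)\|^2 ds \leq C_2 N$ with summable-in-$N$ probability on $\cA_{a_2}$, since this term is only damped by $\kappa^{-2}$ while $\nabla U_1$ is singular at $\pm\fs$. The natural approach is an Itô-based energy identity: applying Itô's formula to $V(x) := \sum_i U_1(x_i)$ along $\widetilde X_\cdot$ yields
\[
\int_0^T\! \|\nabla V\|^2 ds \;=\; V(\widetilde X_0) - V(\widetilde X_T) + M_T + \!\int_0^T\!\! \langle \nabla V, \mathbf{A}\widetilde X_{\ft_{k(s)-1}}\rangle ds + \tfrac12 \!\int_0^T \!\!\Delta V(\widetilde X_s)\, ds,
\]
with $M_T$ a continuous martingale of quadratic variation $\int_0^T\|\nabla V\|^2 ds$. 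One then (a) bounds $V(\widetilde X_0) - V(\widetilde X_T)$ in $L^1$ by $O(N)$ using that $U_1$ is bounded below and that $\nu_0$ avoids the boundary; (b) absorbs the cross term through Cauchy--Schwarz and $\|\mathbf{A}\widetilde X_{\ft_{k-1}}\|^2 \leq a_2^2 \fs^2 N$; and (c) controls the Laplacian via a structural inequality $U_1'' \leq c_1 (U_1')^2 + c_2$ (which holds for standard log-barrier potentials such as $U_1 = -\log(\fs^2 - x^2)$ plus smooth corrections), absorbing the residual $(U_1')^2$-part into the left-hand side. This gives $\E \int_0^T\|\nabla V\|^2 ds \leq C_2 N$; an exponential martingale inequality for $M_T$ (e.g.\ BDG combined with the self-bounded quadratic variation, or a Novikov-type bound) upgrades this to the summable-in-$N$ deviation estimate needed to close the chain of inequalities and conclude Proposition~\ref{thm-coupling}.
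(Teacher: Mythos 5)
Your opening steps agree with the paper: writing the ODE for the difference, using the bound $\|\mathbf{A}\|_{2\to 2}\le a_2$ on $\cA_{a_2}$ together with a lower bound on $U_1''$, and Gronwall, reduce matters to controlling the freeze-error $\zeta_t = \widetilde X_t - \widetilde X_{\ft_{k(t)-1}}$. Decomposing $\zeta_t$ into Brownian increment, drift integral and interaction integral, and disposing of the Brownian and interaction pieces, is also fine. The gap lies in the step you yourself flag as principal: bounding $\int_0^T \|\nabla U(\widetilde X_s)\|^2\,ds$ by $C_2 N$ with summable-in-$N$ failure probability.

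Your It\^o energy identity needs ingredients that are not available under the paper's hypotheses. First, to control $V(\widetilde X_0)=\sum_i U_1(\widetilde X_0^{(i)})$ with exponential-in-$N$ tails you would need exponential moments of $U_1$ under $\nu_0$; the paper only assumes $\nu_0\in\cM_1((-\fs,\fs))$, which does not even guarantee $\E_{\nu_0}[U_1]<\infty$ (e.g.\ a density $\propto ((\fs-|x|)\log^2(\fs-|x|))^{-1}$ near $\pm\fs$ gives a legal $\nu_0$ with $\E_{\nu_0}[-\log(\fs^2-x^2)]=\infty$). Second, the structural bound $U_1''\le c_1(U_1')^2+c_2$ is an extra hypothesis on $U_1$; and for $U_1=-\log(\fs^2-x^2)$ one has $U_1''\sim (U_1')^2$ near the boundary, so the absorption of the Laplacian term into the left side is borderline and constant-sensitive rather than automatic. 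Third, the ``self-bounded quadratic variation'' exponential bound for $M_T$ is sketched but is exactly the delicate point, since $\langle M\rangle_T$ is the quantity you are trying to bound. Even granting all of this, you would have proved the proposition only for a strictly narrower class of $(U_1,\nu_0)$.

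The paper avoids the energy estimate entirely. Rather than bounding $\|\nabla U(\widetilde X_s)\|^2$, it runs the Gronwall bound up to a stopping time at which $L_t=\|\zeta_t\|_2$ first exceeds $3\rho\sqrt N$, and then splits particles into those whose trajectories stay in $[-\fs+\epsilon,\fs-\epsilon]$ (where $|U_1'|\le c'_\epsilon$ is bounded, so the drift contribution to $\zeta_t$ is small once $\kappa$ is large) and those that approach the boundary. The number of boundary-approaching particles is controlled via the event $D_\epsilon$ and the closed set $\cF_\epsilon$ of \eqref{eq:Feps}: since $\mu_\star$ is absolutely continuous w.r.t.\ $\P_1^0$ which puts no mass on paths touching $\pm\fs$, one has $\mu_\star\notin\cF_\epsilon$, and then $\P^{\beta,a_2}_N(D_\epsilon)$ is shown to be exponentially small by the very same Radon--Nikodym transfer (Lemmas \ref{lem:compare} and \ref{lem:Phi-summable}) and Gaussian weak-\abbr{ldp} upper bound (Lemma \ref{lem:weak-ld-gauss}) used to prove Proposition~\ref{thm-disc}. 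This reuse of the \abbr{ldp} machinery is what makes the argument work under the paper's minimal assumptions on $U_1$ and $\nu_0$, and it has no counterpart in your proposal.
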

\noindent
Coupling each coordinate of $X_t$ with the corresponding one of 
$\widetilde X_t$, one has that
\[ d_{W_2}(\mu_N,\widetilde\mu_{N,\kappa})^2 \leq \frac1N \sum_{i=1}^N d_{2}(X^{(i)},\widetilde{X}^{(i)})^2 = \frac1{N T}  \int_0^T\|X_t- \widetilde X_t\|^2_{2}
dt \,.
\]
Thus, combining Proposition~\ref{thm-disc}, Proposition~\ref{thm-coupling} 
and the triangle inequality for $d_{W_2}(\cdot,\cdot)$ we have that
for any $T$ finite, $a_2$ finite and $\delta>0$
\[
\sum_{N\geq 1}^\infty \P^\beta_{N}
\Big( d_{W_2} (\mu_N,\mu_\star) > 2 \sqrt{\delta}, \cA_{a_2} \Big) < \infty\,,
\]
which by Borel--Cantelli I, implies that for any $\delta>0$ and $a_2$ finite,
\[
\P^\beta \Big[ \varlimsup_{N \to \infty} \{ d_{W_2} (\mu_N,\mu_\star) \} > 2 \sqrt{\delta}\,,
\;\; 
\varlimsup_{N \to \infty} \|{\bf A}\|_{2 \to 2} < a_2 \;\; 
\Big] = 0 \,.
\]
In view of \eqref{eq:cond-sing-max}, the proof of 
Theorem~\ref{mainthm-1} is thus complete.

\section{Proof of Proposition~\ref{thm-disc}}\label{sec:frozen} 
Our proof relies on the following 
application of the multivariate Lindeberg's method of \cite[Theorem 1.1]{Chatterjee06}.
\begin{lemma}\label{lem:lindeberg}
Suppose the random vector ${\bf J} \in \R^N$ has
independent entries $\{J_j\}$ such that $\E J_j =0$ and
$\E J_j^2 =1$.
Then, for every $N,\kappa\geq 1$, non-random ${\bf X} = (x_{kj}) \in \R^{\kappa\times N }$, ${\bf b} \in \R^\kappa$, and the quadratic function 
\begin{equation}\label{eq:quad}
 h(z) = \frac{1}{2} \left\| {\bf X} z  - {\bf b} \right\|_2^2  \qquad(z\in\R^N)\,,
\end{equation}
setting $c_0 = 
\frac12 e^{-\sqrt{3}/2}(3^{\frac14}+3^{-\frac14})$ one has that 
\begin{equation}\label{eq:bd-lind}
| \E e^{-h({\bf J})} - \E e^{-h({\bf \widehat J})}  | \le c_0
\sum_{j=1}^N ({\bf X^\Tr X})_{jj}^{3/2}
(\E |J_j|^3 + \E |\widehat J|^3 ) 
\end{equation}
where  $\widehat {\bf J}=(\widehat J_j) \in \R^N$ has i.i.d.\ standard Gaussian entries. 
In addition,
\begin{equation}\label{eq:gauss-lbd}
\E \big[ \exp(h({\bf 0})-h({\bf \widehat J})) \big] \ge \det ({\bf I} + {\bf X X^{\Tr}})^{-1/2} \,.
\end{equation}
\end{lemma}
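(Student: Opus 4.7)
The plan for (\ref{eq:bd-lind}) is to apply the multivariate Lindeberg principle of \cite[Theorem 1.1]{Chatterjee06} to the smooth function $f(z) := e^{-h(z)}$. That principle bounds $|\E f(\mathbf J) - \E f(\widehat{\mathbf J})|$ by $\frac{1}{6}\sum_{j=1}^N \|\partial_j^3 f\|_\infty (\E|J_j|^3 + \E|\widehat J_j|^3)$ for any thrice continuously differentiable $f$ with bounded third partials, whenever the coordinates of $\mathbf J$ and $\widehat{\mathbf J}$ are independent and match in first and second moments. Hence the task reduces to controlling each $\sup_z |\partial_j^3 f(z)|$ in terms of $(\mathbf X^\Tr \mathbf X)_{jj}^{3/2}$.

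Denote $g(z) := \mathbf X z - \mathbf b$ and let $\mathbf X_{\cdot j}$ be the $j$-th column of $\mathbf X$. Since $h$ is quadratic,
\[
\partial_j h = \mathbf X_{\cdot j}^\Tr g(z), \qquad \partial_j^2 h = \|\mathbf X_{\cdot j}\|_2^2 = (\mathbf X^\Tr \mathbf X)_{jj}, \qquad \partial_j^3 h \equiv 0.
\]
Setting $u := \partial_j h$ and $v := \partial_j^2 h$, direct differentiation of $f = e^{-h}$ yields $\partial_j f = -u e^{-h}$, $\partial_j^2 f = (u^2 - v) e^{-h}$ and $\partial_j^3 f = (3 u v - u^3) e^{-h}$. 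By Cauchy--Schwarz, $|u| \le \rho\, r$ with $\rho := \|\mathbf X_{\cdot j}\|_2$ and $r := \|g(z)\|_2$, while $v = \rho^2$ and $h = r^2/2$, whence
\[
|\partial_j^3 f(z)| \le \bigl(|u|^3 + 3v|u|\bigr) e^{-h} \le \rho^3 (r^3 + 3r) e^{-r^2/2}.
\]
A one-variable optimization of $\phi(r) := (r^3 + 3r) e^{-r^2/2}$ gives $\phi'(r) = (3 - r^4) e^{-r^2/2}$, so the maximum over $r \ge 0$ is attained at $r = 3^{1/4}$ with value $3^{3/4}(1 + \sqrt 3)\, e^{-\sqrt 3/2}$. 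Substituting into the Lindeberg bound and simplifying via $\frac{1}{6} \cdot 3^{3/4}(1+\sqrt 3) = \frac{1}{2}(3^{1/4} + 3^{-1/4})$ produces (\ref{eq:bd-lind}) with precisely the constant $c_0$ stated in the lemma.

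For (\ref{eq:gauss-lbd}), the plan is a direct Gaussian integral evaluation. Expanding gives
\[
h(\mathbf 0) - h(\widehat{\mathbf J}) = -\tfrac{1}{2}\, \widehat{\mathbf J}^\Tr \mathbf X^\Tr \mathbf X\, \widehat{\mathbf J} + \mathbf b^\Tr \mathbf X\, \widehat{\mathbf J},
\]
and the standard multivariate Gaussian moment generating function formula then yields
\[
\E e^{h(\mathbf 0) - h(\widehat{\mathbf J})} = \det(\mathbf I_N + \mathbf X^\Tr \mathbf X)^{-1/2} \exp\!\Bigl(\tfrac{1}{2}\, \mathbf b^\Tr \mathbf X (\mathbf I_N + \mathbf X^\Tr \mathbf X)^{-1} \mathbf X^\Tr \mathbf b\Bigr).
\]
The exponential factor is $\ge 1$ since $(\mathbf I_N + \mathbf X^\Tr \mathbf X)^{-1}$ is positive semidefinite; dropping it and applying Sylvester's identity $\det(\mathbf I_N + \mathbf X^\Tr \mathbf X) = \det(\mathbf I_\kappa + \mathbf X \mathbf X^\Tr)$ yields (\ref{eq:gauss-lbd}).

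The only real subtlety is the arithmetic bookkeeping that identifies the constant $c_0$: the sole looseness in the upper bound on $|\partial_j^3 f|$ is the triangle inequality $|3uv - u^3| \le 3 v|u| + |u|^3$, which is strict when $u, v$ share a sign; the constant $c_0$ quoted in the lemma corresponds precisely to pairing this simple bound with the calculus maximum of $\phi$. Beyond that, the proof is a straightforward combination of Chatterjee's Lindeberg principle with a classical Gaussian calculation.
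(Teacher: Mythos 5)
Your proposal is correct and follows essentially the same route as the paper: both invoke Chatterjee's multivariate Lindeberg theorem for $f=e^{-h}$, exploit that $h$ is quadratic so $h_{jj}=(\mathbf X^\Tr\mathbf X)_{jj}$ is constant and $h_{jjj}=0$, use Cauchy--Schwarz to get $|h_j|\le\sqrt{2h\,h_{jj}}$, and optimize $(r^3+3r)e^{-r^2/2}$ to pin down $c_0$. The only cosmetic difference is in~\eqref{eq:gauss-lbd}: the paper writes $\E[e^{-\frac12\|\mathbf g\|_2^2+\langle\mathbf g,\mathbf b\rangle}]\ge\E[e^{-\frac12\|\mathbf g\|_2^2}]$ directly (which uses the symmetry of $\mathbf g$, e.g.\ via a $\cosh$ argument), whereas you evaluate the Gaussian integral exactly and discard a manifestly $\ge 1$ exponential factor, then invoke Sylvester's identity --- this is slightly more explicit but equivalent.
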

\begin{proof} Having mutually independent entries of $\bf J$ whose
first and second moments match those of $\bf \widehat J$, eliminates 
the first two terms of the bound on the \abbr{lhs} of \eqref{eq:bd-lind}
that we get by applying \cite[Theorem 1.1]{Chatterjee06} for the smooth function 
$f(z)=e^{-h(z)}$. Denoting the
first three partial derivatives of a function $f$ w.r.t.\ $z_j$ by $f_j$, $f_{jj}$ and $f_{jjj}$,
the proof of \cite[Theorem 1.1]{Chatterjee06} 
provides a sharper bound than stated in its last term, namely
\begin{equation*}
| \E f({\bf J}) - \E f ({\bf \widehat J})  |\leq \frac{1}{6} \sum_{j=1}^N \|f_{jjj}\|_\infty \, (\E |J_j|^3 + \E |\widehat J|^3 ) \,.
\end{equation*}
For $h(z)$ of \eqref{eq:quad}, we have $\nabla h = {\bf X}^{\Tr} ({\bf X} z-{\bf b})$, 
so $h_{jj} = ({\bf X^\Tr X})_{jj}$ is constant, with $h_{jjj}=0$
and $|h_j| \le \sqrt{2 h h_{jj}}$ by Cauchy--Schwarz. Substituting $r=\sqrt{2h}$ we thus 
have that 
\[
|(e^{-h})_{jjj}| = |h_{jjj} - 3 h_j h_{jj} + h_j^3| e^{-h} 
\le h_{jj}^{3/2} \sup_{r \ge 0} \left\{e^{-\frac{1}{2} r^2} (3 r + r^3) \right\} 
= 6c_0 h_{jj}^{3/2} \,,
\]
from which the \abbr{rhs} of \eqref{eq:bd-lind} follows. To get
\eqref{eq:gauss-lbd} note that the multivariate Gaussian ${\bf g} := \bf X \widehat J$ has 
zero mean and covariance ${\bf X X^\Tr}$. Consequently,
\[
\E \big[ e^{h({\bf 0})-h({\bf \widehat J})} \big] = \E
\big[ e^{-\frac{1}{2} \|{\bf g}\|_2^2 + \langle {\bf g} , {\bf b} \rangle} \big]
\ge \E \big[ e^{-\frac{1}{2} \|{\bf g}\|_2^2} \big] = \det \big(
{\bf I} + \E [{\bf g} {\bf g}^{\Tr}] \big)^{-1/2} \,,
\]
as claimed in \eqref{eq:gauss-lbd}.
\end{proof}

Let $\widehat\P_{N,\kappa}^\beta$ and $\widehat\Pi_{N,\kappa}^\beta$ be the counterparts of $\widetilde\P_{N,\kappa}^\beta$ and $\widetilde\Pi_{N,\kappa}^\beta$ when the disorder~${\bf J}$ is replaced by $\widehat{\bf J}$ whose entries are i.i.d.\ standard Gaussian random variables. Fixing the random variables
\begin{equation}\label{dfn:si}
M_\kappa^{(i)}:=\frac{1}{2} \|{\bf b}^{(i)}\|^2_2, \qquad
b_k^{(i)} := 
\frac{\widetilde X^{(i)}_{\ft_{k}} - \widetilde X^{(i)}_{\ft_{k-1}}-\int_{\ft_{k-1}}^{\ft_k} U_1'(\widetilde X_s^{(i)}) ds}{\sqrt{\ft_k - \ft_{k-1}}} \,,\quad \; k=1,\ldots,\kappa,
\end{equation}
we control $\widetilde \Pi^{\beta}_{N,\kappa}$ 
for some $\delta^{(i)}_N \to 0$ 
in terms of its 
counterpart $\widehat \Pi^{\beta}_{N,\kappa}$ and 
\begin{equation}\label{dfn:Phi-N}
\Phi_{N,\kappa} :=  \frac1N \sum_{i=1}^N \log\big(1+ \delta^{(i)}_N e^{M_\kappa^{(i)}} \big),
\end{equation}
where 
\begin{equation}\label{dfn:delta-n}
\delta^{(i)}_N = c_1 N^{-3/2} \sum_{j=1}^N \E |J_{ij}|^3 \,.
\end{equation}
\begin{lemma}\label{lem:compare}
Assume the independent $\{J_{ij}\}$ satisfy \eqref{eq:J-first-second-mom}.
Then, for any $T,\beta,\kappa$ there exist $N_0$ and $c_1$ finite, such that for every $N\geq N_0$,
\[ 
\frac{{\rm d} \,\widetilde \Pi_{N,\kappa}^{\beta}}{{\rm d}\, \widehat \Pi_{N,\kappa}^{\beta}}
\leq e^{N \Phi_{N,\kappa}}\,.
\]
\end{lemma}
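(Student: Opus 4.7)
The plan is to express $d\widetilde\Pi_{N,\kappa}^\beta/d\widehat\Pi_{N,\kappa}^\beta$ as a product of row-wise ratios of Laplace-type integrals in $\mathbf J$, and then apply Lemma~\ref{lem:lindeberg} row by row, using the Gaussian lower bound~\eqref{eq:gauss-lbd} to turn its additive third-moment error into a multiplicative one. Work conditionally on $(\widetilde X_\cdot,B_\cdot,\mathbf J)$: Girsanov's theorem, applied piecewise on each sub-interval $[\ft_{k-1},\ft_k]$, gives the Radon--Nikodym derivative of $\widetilde\P_{N,\kappa}^\beta(\cdot\mid\mathbf J)$ with respect to the $\beta=0$ law $\P_0^{\otimes N}$. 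Under $\P_0$ the Brownian increments read $B^{(i)}_{\ft_k}-B^{(i)}_{\ft_{k-1}}=\sqrt{\ft_k-\ft_{k-1}}\,b_k^{(i)}$, with $b_k^{(i)}$ as in~\eqref{dfn:si}. Setting $X_{kj}:=\tfrac{\beta\sqrt{\ft_k-\ft_{k-1}}}{\sqrt N}\widetilde X_{\ft_{k-1}}^{(j)}$ and completing the square, the $i$-th contribution to the Girsanov exponent collapses to $M_\kappa^{(i)}-h(\mathbf J_{i\cdot})$ with $h(z)=\tfrac12\|\mathbf Xz-\mathbf b^{(i)}\|_2^2$ the quadratic of~\eqref{eq:quad}.

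Since the rows $\mathbf J_{i\cdot}$ are mutually independent, integrating out $\mathbf J$ produces the disorder-averaged density $\prod_i e^{M_\kappa^{(i)}}\E_{\mathbf J_{i\cdot}}[e^{-h(\mathbf J_{i\cdot})}]$, with the analogous formula for $\widehat{\mathbf J}$. Taking the ratio, the $e^{M_\kappa^{(i)}}$ prefactors cancel and
\[
\frac{d\widetilde\Pi_{N,\kappa}^\beta}{d\widehat\Pi_{N,\kappa}^\beta}=\prod_{i=1}^N\frac{\E_{\mathbf J_{i\cdot}}\!\bigl[e^{-h(\mathbf J_{i\cdot})}\bigr]}{\E_{\widehat{\mathbf J}_{i\cdot}}\!\bigl[e^{-h(\widehat{\mathbf J}_{i\cdot})}\bigr]},
\]
so it suffices to bound each factor by $1+\delta_N^{(i)}e^{M_\kappa^{(i)}}$.

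The uniform bound $|\widetilde X_{\ft_{k-1}}^{(j)}|\le\fs$ together with $\sum_k(\ft_k-\ft_{k-1})=T$ yields $(\mathbf X^\Tr\mathbf X)_{jj}\le\beta^2\fs^2 T/N$ and $\|\mathbf X\mathbf X^\Tr\|\le\beta^2\fs^2 T$, so $\det(\mathbf I+\mathbf X\mathbf X^\Tr)^{1/2}$ is dominated by a constant $C=C(\beta,\fs,T,\kappa)$. The lower bound~\eqref{eq:gauss-lbd} then gives $\E_{\widehat{\mathbf J}_{i\cdot}}[e^{-h(\widehat{\mathbf J}_{i\cdot})}]\ge C^{-1}e^{-M_\kappa^{(i)}}$, while \eqref{eq:bd-lind} contributes an error at most $c_0(\beta^2\fs^2 T)^{3/2}N^{-3/2}\sum_j(\E|J_{ij}|^3+\E|\widehat J|^3)$; since Jensen forces $\E|J_{ij}|^3\ge 1$, the Gaussian third moments are absorbed into the non-Gaussian sum. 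Dividing the additive error by the Gaussian lower bound and identifying with~\eqref{dfn:delta-n} gives
\[
\frac{\E_{\mathbf J_{i\cdot}}[e^{-h}]}{\E_{\widehat{\mathbf J}_{i\cdot}}[e^{-h}]}\le 1+c_1\,e^{M_\kappa^{(i)}}N^{-3/2}\sum_{j=1}^N\E|J_{ij}|^3=1+\delta_N^{(i)}e^{M_\kappa^{(i)}}
\]
for a suitable $c_1$; taking the product over $i$ and invoking~\eqref{dfn:Phi-N} closes the proof.

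The delicate point---as flagged in Remark~\ref{rem:explain}---is upgrading Lindeberg's additive third-moment error to a multiplicative bound on the row-wise ratio. The Gaussian lower bound~\eqref{eq:gauss-lbd} is the exact mechanism that makes this work: its $e^{-M_\kappa^{(i)}}$ denominator compensates the cancellation of the $e^{M_\kappa^{(i)}}$ prefactor, at the price of injecting the potentially very large exponential $e^{M_\kappa^{(i)}}$ into $\delta_N^{(i)}e^{M_\kappa^{(i)}}$. Taming these exponentials on average, i.e.\ controlling $\Phi_{N,\kappa}$, is deferred to the proof of Proposition~\ref{thm-disc}; within the present lemma we can only carry $M_\kappa^{(i)}$ along as a path-dependent prefactor.
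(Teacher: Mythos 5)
Your proof is correct and follows essentially the same route as the paper: Girsanov row by row, completing the square to write the Girsanov exponent as $M_\kappa^{(i)}-h^{(i)}(\mathbf J_{i\cdot})$, integrating out the independent rows of $\mathbf J$ to get a product of Laplace-type integrals, and then applying Lemma~\ref{lem:lindeberg} together with the Gaussian lower bound~\eqref{eq:gauss-lbd} to convert the additive third-moment error into the multiplicative bound $1+\delta_N^{(i)}e^{M_\kappa^{(i)}}$. The only cosmetic difference is that the paper phrases this through the quantities $\Gamma_{N,\kappa}^\beta$, $\widehat\Gamma_{N,\kappa}^\beta$ and the inequality~\eqref{eq:Gamma-Gamma-hat-compare}, whereas you write out the row-wise ratio directly; the paper also makes explicit that the disorder-averaged Girsanov density is a function of the empirical measure $\widetilde\mu_{N,\kappa}$ alone, a point you implicitly use when passing from path-space densities to densities of the empirical-measure laws $\widetilde\Pi$, $\widehat\Pi$.
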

\begin{proof}
For independently and uniformly chosen $I_N \in \{1,\ldots,N\}$, let
\begin{equation}\label{dfn:Gamma}
\Gamma_{N,\kappa}^\beta(\widetilde{\mu}_{N,\kappa}) := \E_{I_N} \log \E_{\bf J} \bigg[ 
\exp \Big(\langle {\bf b}^{(I_N)}, {\bf g}^{(I_N)} \rangle - \frac{1}{2} \|{\bf g}^{(I_N)}\|_2^2 \Big)
\bigg] \,,
\end{equation}
where $J_{ij}$ are independent and 
the coordinates of each ${\bf g}^{(i)} \in \R^{\kappa}$ ($i=1,\ldots,N$) are
\begin{equation}\label{dfn:gt}
g^{(i)}_k := \sum_{j=1}^N x_{kj} J_{ij} \,, \quad
x_{kj} := \frac{\beta \sqrt{T}}{\sqrt{N \kappa}}
\widetilde{X}^{(j)}_{\ft_{k-1}} \,,
\quad k=1,\ldots,\kappa\,.
\end{equation}
 We further define 
$\widehat \Gamma_{N,\kappa}^\beta(\widetilde{\mu}_{N,\kappa})$
as in \eqref{dfn:Gamma}--\eqref{dfn:gt}, except for using $\{\widehat {\bf g}^{(i)}\}$ 
and the i.i.d.\ standard normal variables $\{ \widehat J_{ij} \}$ instead of 
$\{{\bf g}^{(i)}\}$ and $\{J_{ij}\}$, respectively. Note that by Girsanov's theorem  we have
the Radon--Nykodim derivative
\begin{equation}\label{eq:rdn-beta}
\frac{{\rm d} \,\widetilde \P_{N,\kappa}^\beta}{{\rm d}\, \widetilde\P_{N,\kappa}^0} 
= \exp\Big( \sum_{i=1}^N \big[ \langle {\bf b}^{(i)}, {\bf g}^{(i)} \rangle 
- \frac{1}{2} \|{\bf g}^{(i)}\|_2^2 \big] \Big) \,,
\end{equation}
where Novikov's condition holds since 
$$
\widetilde\E_{N,\kappa}^0\bigg( \exp\Big(\frac{1}{2} \sum_{i=1}^N\|{\bf g}^{(i)}\|_2^2\Big)\;\Big|\;{\bf J}\bigg)<\infty \,,
$$
due to the uniform bound on $\{x_{kj}\}$ of \eqref{dfn:gt}
 (as $\| \widetilde X_t \|_\infty \le \fs$).
Under $\widetilde \P^0_{N,\kappa} = \P^0_N$ we have that ${\bf J}$ is independent of 
$(\widetilde X_{\cdot})$,
thereby yielding that
\begin{equation}\label{eq:rnd}
\E_{\bf J} \bigg[ \frac{{\rm d} \,\widetilde \P_{N,\kappa}^\beta}{{\rm d}\,\P_{N}^0} \bigg]
 = \exp(N \Gamma_{N,\kappa}^\beta(\widetilde{\mu}_{N,\kappa}) )\,,
 \end{equation}
where we also crucially used the independence of the rows of ${\bf J}$
to arrive at the specific form on \abbr{rhs}. Being a function of only $\widetilde \mu_{N,\kappa}$, the \abbr{rhs} of 
\eqref{eq:rnd} 
coincides with the Radon--Nykodim derivative restricted to these empirical measures, namely
\[
\frac{{\rm d} \,\widetilde \Pi_{N,\kappa}^{\beta}}{{\rm d}\,\Pi_{N}^{0}}
= \exp(N \Gamma_{N,\kappa}^\beta(\widetilde{\mu}_{N,\kappa}) ) \,.
\]
The same argument applies for 
the Radon--Nykodim derivative of $\widehat \Pi^{\beta}_{N,\kappa}$ with respect 
to~$\Pi^0_{N}$. 
To complete the proof it thus 
suffices to show that
\begin{align}\label{eq:Gamma-Gamma-hat-compare}
    \Gamma_{N,\kappa}^\beta(\widetilde{\mu}_{N,\kappa}) -
\widehat \Gamma_{N,\kappa}^\beta(\widetilde{\mu}_{N,\kappa}) \le \Phi_{N,\kappa} \,.
\end{align}
Unraveling \eqref{dfn:si}--\eqref{dfn:gt}
this follows upon showing that for each $1 \le i \le N$,
\begin{equation}\label{eq:lind-i}
\E \big[ e^{-h^{(i)}({\bf J}^{(i)})} \big] - \E \big[ e^{-h^{(i)}({\bf \widehat J})} \big]
\le \delta^{(i)}_N e^{h^{(i)}({\bf 0})} \E \big[ e^{-h^{(i)}({\bf \widehat J})} \big] \,,
\end{equation}
where 
${\bf \widehat J}$ is a standard multivariate Gaussian, 
${\bf J}^{(i)} = (J_{i1},\ldots,J_{iN}) \in \R^N$
and $h^{(i)}(\cdot)$ of \eqref{eq:quad} with ${\bf b}^{(i)}$ of \eqref{dfn:si} and 
$\{x_{kj}\}$ of \eqref{dfn:gt}. Since 
$\widetilde X_t^{(j)} \in [-\fs,\fs]$, we have that 
\[
|x_{kj}|^2 \le \frac{(\beta \fs)^2 T}{\kappa N} \quad
\Longrightarrow \quad ({\bf X^{\Tr} X})_{jj} \le \frac{(\beta \fs)^2 T}{N}, \;\;\;
({\bf X X^{\Tr}})_{kk'} \le \frac{(\beta \fs)^2 T}{\kappa} \,.
\]
Thus, from Lemma~\ref{lem:lindeberg} the \abbr{rhs} of \eqref{eq:gauss-lbd} 
is bounded below in our case by $1/c_2$ for some $c_2= 
c_2(\beta \fs \sqrt{T},\kappa)$ finite, while 
for some $c_3 = 
c_3(\beta \fs \sqrt{T})$ finite,
the \abbr{lhs} of \eqref{eq:lind-i} is
at most
$c_3 N^{-3/2} \sum_{j \le N} (\E |J_{ij}|^3 + \E |\widehat{J}|^3 )$.
With $\E |J_{ij}|^3 \ge 1$ and $\E|\widehat{J}|^3 = \sqrt{8/\pi}$,
taking $c_1 = c_2 c_3 (1+\sqrt{8/\pi})$ in 
\eqref{dfn:delta-n} guarantees that \eqref{eq:lind-i} 
would hold and thereby completes the proof of the lemma.
\end{proof}

The following elementary lemma is needed for proving 
Lemma \ref{lem:Phi-summable} (namely, to show that
$\Phi_{N,\kappa} \to 0$ a.s.\ when $N \to \infty$).
\begin{lemma}\label{lem:mgf-quadratic}
Suppose vectors ${\bf J}=(J_1,\ldots,J_N) \in \R^N$ are composed
of independent coordinates $\{J_i\}$ such that for some 
$\epsilon >0$, $v<\infty$, and all $N \ge N_0$, 
\begin{equation}\label{eq:expon-tail} 
\sup_{\theta \in (0,\epsilon]} 
\Big\{ \frac{1}{\theta^2 N} \sum_{j=1}^N 
\log \big( \E [e^{\theta J_j}] \vee \E[e^{-\theta J_j}] \big) \Big\} \le v \,.
\end{equation}
For any $a<\infty$, if $ \alpha < \frac{1}{4 v} \wedge \frac{\epsilon}{4 a}$ and 
$N \ge N_1 := N_0 \vee \frac{2}{\epsilon a}$, then
\[
\sup_{\{ {\bf u} \in \R^N \,:\;
 \|{\bf u} \|_\infty\le N^{-1/2} \} } \, \E \Big[
\exp \big( \alpha \langle {\bf u}, {\bf J} \rangle^2 \big) \,
{\one}_{ \{ \|{\bf J}\|_1 \le a N \} } \Big] \le f_\star(\alpha v) < \infty \,. 
\]
\end{lemma}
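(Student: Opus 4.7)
The plan is to linearize the quadratic form in the exponent via a Hubbard--Stratonovich identity, factor the resulting linear expression using independence of $\{J_j\}$, and then split the auxiliary Gaussian integration into a bulk region where hypothesis \eqref{eq:expon-tail} supplies control, and a tail region where the truncation $\|{\bf J}\|_1 \le aN$ takes over. Concretely, let $Z$ be a standard normal independent of ${\bf J}$ and use $e^{\alpha s^2} = \E_Z[e^{\sqrt{2\alpha}Zs}]$ at $s = \langle {\bf u}, {\bf J}\rangle$. Swapping expectations by Fubini reduces the goal to bounding $\E_Z[\Psi(Z)]$, where
\[
\Psi(z):= \E\Big[e^{\sqrt{2\alpha}z\langle {\bf u}, {\bf J}\rangle}\one_{\{\|{\bf J}\|_1\le aN\}}\Big],
\]
which I split according to whether $|Z|\le M$ or $|Z|>M$ for the cutoff $M := \epsilon\sqrt{N/(2\alpha)}$.

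\textbf{Bulk region.} On $\{|z|\le M\}$ drop the indicator and factor using the independence of the $J_j$:
\[
\Psi(z)\le \prod_{j=1}^N \E\big[e^{\theta_j J_j}\big], \qquad \theta_j := \sqrt{2\alpha}\,z\, u_j.
\]
The constraint $\|{\bf u}\|_\infty\le N^{-1/2}$ forces $|\theta_j|\le \theta_0 := \sqrt{2\alpha}|z|/\sqrt{N}\le \epsilon$. Since $\theta\mapsto\log\E[e^{\theta J_j}]$ is convex in $\theta$, on the symmetric interval $[-\theta_0,\theta_0]$ it is bounded by $\log\big(\E[e^{\theta_0 J_j}]\vee\E[e^{-\theta_0 J_j}]\big)$. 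Summing over $j$ and invoking \eqref{eq:expon-tail} yields
\[
\sum_{j=1}^N \log \E[e^{\theta_j J_j}] \;\le\; v\theta_0^2 N \;=\; 2v\alpha z^2,
\]
hence $\Psi(z)\le e^{2v\alpha z^2}$ on this range. Integrating against the $N(0,1)$ density, the condition $\alpha < 1/(4v)$ ensures convergence and gives the bulk bound
\[
\int_{\R} e^{2v\alpha z^2 - z^2/2}\,\frac{dz}{\sqrt{2\pi}} \;=\; (1-4v\alpha)^{-1/2}.
\]

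\textbf{Tail region.} On $\{|z|>M\}$ the indicator provides $|\langle {\bf u}, {\bf J}\rangle|\le \|{\bf u}\|_\infty\|{\bf J}\|_1\le a\sqrt{N}$, so $\Psi(z)\le \exp(a\sqrt{2\alpha N}|z|)$. Completing the square in the exponent bounds the integrand by $\exp\big(a^2\alpha N - (|z|-a\sqrt{2\alpha N})^2/2\big)$. The hypothesis $\alpha<\epsilon/(4a)$ is precisely what ensures $M = \epsilon\sqrt{N/(2\alpha)} > 2a\sqrt{2\alpha N}$, so throughout the tail $|z|-a\sqrt{2\alpha N}>|z|/2$, and the integrand is at most $\exp(a^2\alpha N - z^2/8)$. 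A standard Gaussian tail estimate then yields a bound of order $\exp(-cN)$ with $c = \epsilon^2/(16\alpha)-a^2\alpha>0$, which is at most $1$ for $N\ge N_1$. Combining bulk and tail gives the lemma with, for instance, $f_\star(\alpha v) := (1-4\alpha v)^{-1/2} + 1$.

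The main delicate point is the application of the averaged hypothesis \eqref{eq:expon-tail} to coordinates carrying different $|\theta_j|$: convexity of the cumulant generating function rescues this by reducing control at an arbitrary $|\theta_j|\le \theta_0$ to control at the common endpoint $\theta_0$, at which \eqref{eq:expon-tail} directly applies. A second, sharp matching occurs in the tail step, where the interplay between the cutoff $M$ and the completed-square center $a\sqrt{2\alpha N}$ exploits the exact threshold $\alpha<\epsilon/(4a)$ imposed in the hypothesis.
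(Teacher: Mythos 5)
Correct, and essentially the paper's argument. Where the paper uses the truncated Gaussian linearization \eqref{eq:elem-G}, which (since the $\ell_1$-truncation forces $|Y_{\bf u}|\le r_N$) keeps the auxiliary variable $\lambda$ in exactly the window $[-2r_N,2r_N]$ where the moment bound \eqref{eq:Yu-bd} applies, you instead apply the untruncated Hubbard--Stratonovich identity and then split the auxiliary Gaussian into $|Z|\le M$ and $|Z|>M$, reusing the $\ell_1$-truncation only to control the tail region; your bulk computation is precisely the paper's derivation of \eqref{eq:Yu-bd}, and the two devices are interchangeable, with the paper's avoiding the explicit tail estimate and yielding an $f_\star$ depending only on $\alpha v$, while yours makes the roles of $\alpha<1/(4v)$ and $\alpha<\epsilon/(4a)$ more visibly separate.
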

\begin{proof} Fixing $\alpha>0$,
associate with each non-random 
${\bf u} \in \R^N$ such that $\|{\bf u}\|_\infty \le N^{-1/2}$
the variable 
$Y_{\bf u} := \sqrt{2 \alpha} \langle {\bf u}, {\bf J} \rangle$, noting 
that for $\alpha \le \epsilon/(4a)$ and any such ${\bf u}$
\[
\{ \| {\bf J} \| \le a N \} \quad \Longrightarrow \quad
|Y_{\bf u}| \le a \sqrt{2 \alpha N} \le \frac{\epsilon}{2} \sqrt{N/(2\alpha)} := r_N \,.
\]
Taking $N \ge N_0$ yields in view of \eqref{eq:expon-tail}
(at $\theta = \lambda \sqrt{2\alpha/N}$), 
that
\begin{equation}\label{eq:Yu-bd}
\E \Big[ e^{\lambda Y_{\bf u} } \Big] 
\le e^{2 \alpha v \lambda^2} \,, \qquad \forall 
|\lambda| \le 2 r_N \,.
\end{equation}
Recall the elementary bound, valid for all $r \ge 1$
\begin{equation}\label{eq:elem-G}
e^{y^2/2} \one_{[-r,r]}(y) \le 2 \int_{-2r}^{2r} e^{\lambda y} e^{-\lambda^2/2} \frac{d \lambda}{\sqrt{2\pi}} \,.
\end{equation}
Further, since $r_N \ge \sqrt{\epsilon a N/2} \ge 1$ for all $N \ge N_1$, upon 
combining the bounds 
\eqref{eq:Yu-bd}, \eqref{eq:elem-G} with Fubini's theorem, we find that
for any such $\alpha$, $Y_{\bf u}$ and for all $N \ge N_1$, 
\[
\E \Big[ e^{Y_{\bf u}^2/2} \one_{\{ \|{\bf J}\|_1 \le a N \}} \Big] \le 2 
\int_{-\infty}^\infty e^{2 \alpha v \lambda^2}e^{-\lambda^2/2} 
\frac{d \lambda}{\sqrt{2\pi}} := f_\star(\alpha v) < \infty \,,
\]
as claimed.
\end{proof}

Equipped with Lemma \ref{lem:mgf-quadratic} we
proceed to verify that a.s.\ $\Phi_{N,\kappa} \to 0$ when $N \to \infty$.
\begin{lemma}\label{lem:Phi-summable}
Suppose the independent variables $\{J_{ij}\}$ satisfy~\eqref{eq:J-assump-mgf} and~\eqref{eq:J-assump-moments}. 
Then, for any $T,\beta,a_2,\kappa$ and all $\eta>0$,
\begin{equation}\label{eq:PhiN-summable}
\sum_{N=1}^\infty  \widetilde \P^{\beta,a_2}_{N,\kappa} 
(\Phi_{N,\kappa} > 2 \eta) < \infty \,.
\end{equation}
\end{lemma}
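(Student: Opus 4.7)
The overall approach is to establish uniform subexponential moment bounds on $M_\kappa^{(i)}$ using Lemma~\ref{lem:mgf-quadratic}, combined with the elementary inequality $\log(1+x)\le x^t/t$ (valid for $x\ge 0$, $t\in(0,1)$), and then to control $\Phi_{N,\kappa}$ via a higher-moment Markov inequality. Writing \eqref{eq:main_eq_frozen} as $d\widetilde X_t = dB_t - U_1'(\widetilde X_t)\,dt + {\bf A}\widetilde X_{\ft_{k-1}}\,dt$ on each subinterval, the coordinate $b_k^{(i)}$ of \eqref{dfn:si} decomposes (up to a bounded drift correction that is negligible for what follows) as $b_k^{(i)} = Z_k^{(i)} + S_k^{(i)}$, where $Z_k^{(i)} := (B_{\ft_k}^{(i)}-B_{\ft_{k-1}}^{(i)})/\sqrt{T/\kappa}$ is a standard normal (independent across $i,k$) and $S_k^{(i)} = \langle {\bf u}_k, {\bf J}^{(i)}\rangle$ with ${\bf u}_k := (\beta\sqrt{T/\kappa}/\sqrt{N})\widetilde X_{\ft_{k-1}}$ satisfying $\|{\bf u}_k\|_\infty = O(N^{-1/2})$. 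Setting $V_i := \sum_k (S_k^{(i)})^2$, the conditional noncentral-$\chi^2_\kappa$ identity
\[
\E\!\left[e^{\,t\,\|{\bf b}^{(i)}\|_2^2/2}\,\big|\,V_i\right] = (1-t)^{-\kappa/2}\exp\!\left(\frac{t\,V_i}{2(1-t)}\right),\qquad t<1,
\]
reduces matters to a uniform MGF bound on $V_i$ under $\widetilde\P^{\beta,a_2}_{N,\kappa}$.

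The central technical step is to produce $\alpha > 0$ (independent of $i,N$) such that $\widetilde\E^{\beta,a_2}_{N,\kappa}[e^{\alpha V_i}] \leq C$. Since ${\bf u}_k$ depends on ${\bf J}^{(i)}$ through $\widetilde X$, Lemma~\ref{lem:mgf-quadratic} cannot be applied directly. I would introduce a cavity process $\widetilde X^{(-i)}$ that solves \eqref{eq:main_eq_frozen} with the $i$th row and column of ${\bf A}$ zeroed out, so that $\widetilde X^{(-i)}$---and hence the corresponding cavity vector ${\bf u}_k^{(-i)}$---is independent of ${\bf J}^{(i)}$. Using $\|{\bf A}\|_{2\to 2}\leq a_2$ on $\cA_{a_2}$ together with a Gronwall-type estimate applied interval by interval to \eqref{eq:main_eq_frozen}, one obtains $\|\widetilde X_s - \widetilde X^{(-i)}_s\|_\infty = O(N^{-1/2})$ uniformly in $s\le T$, so ${\bf u}_k^{(-i)}$ still satisfies the sup-norm hypothesis of Lemma~\ref{lem:mgf-quadratic}. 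Applying that lemma conditionally on $\widetilde X^{(-i)}$ to each $\langle {\bf u}_k^{(-i)}, {\bf J}^{(i)}\rangle^2$, removing the indicator $\one_{\|{\bf J}^{(i)}\|_1\leq aN}$ (whose complement has summable probability simultaneously in $i$ under \eqref{eq:J-assump-mgf} by a standard Bernstein argument), and invoking H\"older across $k=1,\dots,\kappa$ on the product $\prod_k e^{\alpha(S_k^{(i)})^2}$ yields the target MGF bound on $V_i$. Feeding this back into the noncentral-$\chi^2_\kappa$ display yields $\widetilde\E^{\beta,a_2}_{N,\kappa}[e^{t_0 M_\kappa^{(i)}}] \leq C'$ for some $t_0 \in (0,1)$.

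Putting the pieces together, for any integer $p\geq 1$ and $t = t_0/p$, the inequality $\log(1+x)\leq x^t/t$ combined with the power-mean inequality gives
\[
\Phi_{N,\kappa}^p \leq \frac{(p/t_0)^p}{N}\sum_{i=1}^N (\delta_N^{(i)})^{t_0}\,e^{t_0 M_\kappa^{(i)}}\,,
\]
so that $\widetilde\E^{\beta,a_2}_{N,\kappa}[\Phi_{N,\kappa}^p] \leq C''(p/t_0)^p\,\bar\delta_N^{t_0}$ by the uniform MGF bound and Jensen's inequality (with $x\mapsto x^{t_0}$ concave on $\R_+$), where $\bar\delta_N := N^{-1}\sum_i \delta_N^{(i)} = o(N^{-(5/2-\gamma)})$ by \eqref{eq:J-assump-moments}. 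Markov then yields $\widetilde\P^{\beta,a_2}_{N,\kappa}(\Phi_{N,\kappa} > 2\eta) = O\!\big(N^{-t_0 (5/2-\gamma)}\big)$; summability in $N$ follows provided $t_0(5/2-\gamma) > 1$. The hard part will be producing a value of $\alpha$---and consequently $t_0\in(0,1)$---large enough for this to hold, which demands a sharp implementation of the cavity estimate; in cases where the resulting $t_0$ is too small, a supplementary split based on whether $M_\kappa^{(i)}$ exceeds a threshold of order $\log N$, controlling the typical part through iid concentration of $W_i := \sum_k (Z_k^{(i)})^2$ and the atypical part through the subexponential bound on $V_i$, is needed to promote the polynomial decay to summability.
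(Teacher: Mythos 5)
Your opening decomposition is the same as the paper's: from \eqref{eq:main_eq_frozen}, $b_k^{(i)}=Z_k^{(i)}+g_k^{(i)}$ exactly (the drift is already subtracted in \eqref{dfn:si}, so there is no residual correction), with $Z_k^{(i)}$ the normalized Brownian increment and $g_k^{(i)}=\langle {\bf x}_k,{\bf J}^{(i)}\rangle$; this is precisely $\widehat b_k^{(i)}=b_k^{(i)}-g_k^{(i)}$ in the paper's proof. Beyond that, however, your two central steps diverge from the paper and each has a real gap.

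First, the decoupling step. You propose a cavity process $\widetilde X^{(-i)}$ and claim $\|\widetilde X_s-\widetilde X_s^{(-i)}\|_\infty=O(N^{-1/2})$. This is not correct: the perturbation $({\bf A}-{\bf A}^{(-i)})\widetilde X^{(-i)}_{\ft_{k-1}}$ has an $O(1)$ entry at coordinate $i$, so the pointwise difference at that coordinate is order one, and the honest Gronwall estimate on $\cA_{a_2}$ gives $\|\widetilde X_s-\widetilde X_s^{(-i)}\|_2=O(1)$, not a uniform $N^{-1/2}$ coordinate bound. Consequently ${\bf u}_k-{\bf u}_k^{(-i)}$ is not sup-norm small; one must instead treat the cross-term $\langle {\bf u}_k-{\bf u}_k^{(-i)},{\bf J}^{(i)}\rangle$, which is correlated with ${\bf J}^{(i)}$ and needs a Cauchy--Schwarz estimate together with control of $\|{\bf J}^{(i)}\|_2$. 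None of this is carried out. The paper sidesteps the cavity entirely: it observes (via \eqref{eq:RN-L1-bd}) that on $\cA_{a_2}$ the set $\cS_\star$ of indices with $\|{\bf g}^{(i)}\|_2^2\ge q r_N$ has only $o(N)$ elements, takes a union bound over candidate sets $\cS$ at cost $\exp(o(N))$, and for each fixed $\cS$ changes measure to $\widetilde\P^{\beta;\cS}_{N,\kappa}$ under which $\{{\bf J}_i,\,i\in\cS\}$ is \emph{independent} of $\cF_N$ --- so Lemma~\ref{lem:mgf-quadratic} applies cleanly, with no error term to chase.

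Second, and more fundamentally, your final Markov step cannot give summability in general. You arrive at $\widetilde\P^{\beta,a_2}_{N,\kappa}(\Phi_{N,\kappa}>2\eta)=O(N^{-t_0(5/2-\gamma)})$, and since $t_0<1$ while $\gamma\in(1,5/2)$ can be as close to $5/2$ as one likes, the exponent $t_0(5/2-\gamma)$ can be arbitrarily small; already for $\gamma>3/2$ summability fails even with $t_0=1$. You acknowledge this and gesture at a supplementary split, but that split is exactly the crux of the lemma and is not supplied. The paper's fix is the truncation $M_\kappa^{(i)}\le(1+q)\widehat M_\kappa^{(i)}+r_N+q^{-1}\|{\bf g}^{(i)}\|_2^2\one_{\{\|{\bf g}^{(i)}\|_2^2\ge qr_N\}}$ with $r_N\to\infty$ slowly, plus the identity $\log(1+ye^R)\le R+\log(1+y)$, yielding $\Phi_{N,\kappa}\le\widehat\Phi_{N,\kappa}+\Psi_{N,\kappa}$. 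For $\widehat\Phi_{N,\kappa}$ (a sum of i.i.d.\ terms driven by $\chi^2_\kappa$ variables and the small $\widehat\delta_N^{(i)}$), a direct exponential Markov bound gives $\exp(-cN)$ decay --- not polynomial --- and the tail $\Psi_{N,\kappa}$ is what the union-bound/change-of-measure/Lemma~\ref{lem:mgf-quadratic} machinery controls. Until you supply an argument that upgrades your polynomial rate to a summable one, the proof is incomplete precisely where the difficulty lies.
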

\begin{proof} Set 
${\widehat M}_\kappa^{(i)}:=\frac{1}{2} \|{\bf \widehat b}^{(i)}\|^2_2$ for
$\widehat b_k^{(i)} := b_k^{(i)} - g_k^{(i)}$. Then, for 
any $q \in (0,1]$ and $r_N \ge 0$,
\begin{equation}\label{eq:M-to-whM}
M_\kappa^{(i)} \le  (1+q) {\widehat M}_\kappa^{(i)} +
q^{-1} \| {\bf g}^{(i)} \|_2^2
\le (1+q) {\widehat M}_\kappa^{(i)} + r_N 
+ q^{-1} \| {\bf g}^{(i)} \|_2^2 
\one_{\{\| {\bf g}^{(i)} \|_2^2 \ge q r_N\}} \,. 
\end{equation}
With $\widehat{\delta}^{(i)}_N := e^{r_N} \delta^{(i)}_N$ we thus get 
from \eqref{dfn:Phi-N} and
$\log(1+y e^R) \le R + \log (1+y)$, for $R,y \ge 0$, that 
\[
\Phi_{N,\kappa} 
\le \frac{1}{N} \sum_{i=1}^N \log \big(1 + \widehat{\delta}^{(i)}_N 
e^{(1+q) {\widehat M}_\kappa^{(i)}} \big) + \frac{1}{q N} \sum_{i=1}^N 
\| {\bf g}^{(i)} \|_2^2 \one_{\{\| {\bf g}^{(i)} \|_2^2 \ge q r_N\}} 
:= \widehat \Phi_{N,\kappa} + \Psi_{N,\kappa} \,.
\]
Taking $\varphi := 1-q =\frac{2}{3} (\gamma-1)$ for 
$1 < \gamma < 5/2$ of \eqref{eq:J-assump-moments} 
and $r_N \to \infty$ slowly enough, we find that as $N \to \infty$,
\begin{equation}\label{dfn:hat-dN}
\widehat \delta_N := \frac{1}{N} \sum_{i=1} (\widehat \delta_N^{(i)})^\varphi 
\le (c_1 e^{r_N})^\varphi 
N^{-\gamma} \sum_{i,j=1}^N \E |J_{ij}|^3 \to 0 \,.
\end{equation}
Further, under $\widetilde \P^{\beta}_{N,\kappa}$ the 
variables $\{\widehat b_k^{(i)}\}$ are i.i.d.\ standard Gaussian, 
independent of ${\bf J}$. In particular, 
$(2 \widehat M_\kappa)^{1/2}$ is the Euclidean norm of a 
$\kappa$-dimensional, standard Gaussian random vector, which 
has the density $\hat{c}_\kappa r^{\kappa-1} e^{-r^2/2}$
at $r \in [0,\infty)$ for some $\hat{c}_\kappa < \infty$.
Thus, 
the elementary inequality $(1+u)^\varphi \leq 1 + u^{\varphi}$,
valid for $u\geq 0$ and $\varphi<1$, yields
the bound 
\[
\widetilde \E^{\beta}_{N,\kappa} \Big[\big( 1+ \widehat \delta e^{(1+q) \widehat M^{(i)}_\kappa}\big)^{\varphi} 
\Big] 
\le 
1 + (\widehat \delta)^{\varphi}\, \hat{c}_\kappa \int_0^\infty r^{\kappa-1} 
e^{-(q r)^2/2} dr 
= 1 + (\widehat \delta)^{\varphi} \,q^{-\kappa} \,.
\]
Combining this with Markov's inequality, yields for the i.i.d.\ 
$\widehat{M}_\kappa^{(i)}$, that 
\[
\widetilde \P^{\beta}_{N,\kappa} 
(\widehat \Phi_{N,\kappa} >  \eta) \le  
e^{-\varphi \eta N}
\prod_{i=1}^N \widetilde \E^{\beta}_{N,\kappa} \Big[\big( 1+ \widehat \delta_N^{(i)} e^{(1+q) \widehat M^{(i)}_\kappa}\big)^{\varphi} \Big] 
\le e^{-N (\varphi \eta - q^{-\kappa} \widehat{\delta}_N)} \,.
\]
In view of \eqref{dfn:hat-dN}, we thus deduce that 
\begin{equation}\label{eq:hat-PhiN-summable}
\sum_{N=1}^\infty  \widetilde \P^{\beta}_{N,\kappa} 
(\widehat \Phi_{N,\kappa} >  \eta) < \infty 
\end{equation}
and complete the proof of the lemma 
upon 
checking that for any $\eta>0$,
\begin{equation}\label{eq:PsiN-summable}
\sum_{N=1}^\infty  \widetilde \P^{\beta,a_2}_{N,\kappa} 
(\Psi_{N,\kappa} >  \eta) < \infty \,.
\end{equation}
To this end, first note that if $\|{\bf A}\|_{2 \to 2} \le a_2$, then necessarily
\begin{equation}\label{eq:RN-L1-bd}
\sum_{i=1}^N \|{\bf g}^{(i)}\|_2^2 = 
\frac{T}{\kappa}
\sum_{k=1}^{\kappa}  \|{\bf A} \widetilde X_{\ft_{k-1}}\|_2^2 \le T a_2^2 \fs^2 N \,.
\end{equation}
Thus, the random set 
$\cS_\star := \{ i \le N : \| {\bf g}^{(i)} \|_2^2 \ge q r_N \}$ has at most 
\[\ell_N := \lceil T a_2^2 \fs^2 N / (q r_N) \rceil = o(N)\] 
elements. By the union bound over the 
at most $\binom{N}{\ell_N} = \exp(o(N))$ ways to choose a non-random 
set $\cS \subseteq [N]$ of size $\ell_N$, it suffices for 
\eqref{eq:PsiN-summable} to show that 
\begin{equation}\label{eq:RS-tail}
\lim_{N \to \infty} \sup_{|\cS| = \ell_N} \frac{1}{N} \log  
\widetilde \P^{\beta}_{N,\kappa} (R_{\cS} >  q \eta N, \cA_{a_2}) < 0
\,, \quad {\rm where} \quad 
R_{\cS} := \sum_{i \in \cS} \| {\bf g}^{(i)} \|_2^2 \,.
\end{equation}
To this end, fixing $\cS \subset [N]$ of size $\ell_N$, consider the measure 
$\widetilde \P_{N,\kappa}^{\beta;\cS}$ where we set $\beta=0$ 
at all coordinates $i \in \cS$ of \eqref{eq:main_eq_frozen}, while not 
changing the value of $\beta$ when $i \notin \cS$. One then has
similarly to \eqref{eq:rdn-beta} the following Radon--Nykodim derivative,
expressed in terms of ${\bf b}^{(i)}$ of \eqref{dfn:si} and 
${\bf g}^{(i)}$ of \eqref{dfn:gt} by  
\begin{align}\label{eq:rdn-S}
\frac{{\rm d} \,\widetilde \P_{N,\kappa}^{\beta}}
{{\rm d}\, \widetilde\P_{N,\kappa}^{\beta;\cS}} 
& = \exp \Big( \sum_{i \in \cS}  
\langle {\bf b}^{(i)}, {\bf g}^{(i)} \rangle - \frac{1}{2} R_{\cS} \Big) \,. 
\end{align}
The \abbr{rhs} of \eqref{eq:rdn-S} is bounded for any $\theta>0$ (using the trivial bound $xy\leq (x^2+y^2)/2$ for $x=(1+\theta)^{-1/2}{\bf b}^{(i)}$ and $y=(1+\theta)^{1/2}{\bf g}^{(i)}$) by
\[
\exp\Big( \frac{M_{\cS}}{1+\theta} + \frac{\theta}{2} R_{\cS} \Big) \,,
\quad {\rm where } \quad 
M_{\cS} := \frac{1}{2} \sum_{i \in \cS} \| {\bf b}^{(i)}\|^2\,.
\]
In addition, $\max_{i} \{ N^{-1/2} \sum_{j=1}^N |A_{ij}| \}
\le \|{\bf A}\|_{2 \to 2}$ for any $N$-dimensional matrix,
yielding for $a:=a_2/\beta$, that   
\[
\cA_{a_2} \subseteq \cA^{\cS}_a := 
\bigcap_{i \in \cS} \{ \sum_{j=1}^N |J_{ij}| \le a N \} \,.
\]
Combining the preceding bounds, we arrive at
\begin{align*}
\widetilde \P^{\beta}_{N,\kappa} (R_\cS >  q \eta N, \cA_{a_2})
&\le \widetilde \E^{\beta;\cS}_{N,\kappa} \Big[
\exp\big(\frac{M_{\cS}}{1+\theta} + \frac{\theta}{2} R_{\cS}\big) 
\one_{\{R_\cS > q \eta N\}} \one_{\cA^\cS_{a}} \Big] \nonumber \\
&\le e^{-\theta q \eta N/2} \,
 \widetilde \E^{\beta;\cS}_{N,\kappa} \Big[
\exp\big(\frac{M_{\cS}}{1+\theta} + \theta R_{\cS}\big) \one_{\cA^\cS_{a}} \Big] \,.
\end{align*}
Under $\widetilde \P^{\beta;\cS}_{N,\kappa}$ the
variables $\{b_k^{(i)}, i \in \cS, k \le \kappa \}$ 
of \eqref{dfn:si} are i.i.d.\ standard Gaussian. With $M_\cS$ 
being the sum of half the squares of these variables, we clearly 
have that for some $f_0(\cdot)$ finite, any $\theta>0$ and all $\kappa,\cS$,
\[
\widetilde \E^{\beta;\cS}_{N,\kappa} \big[ e^{M_{\cS}/(1+\theta)} \big] 
= f_0(\theta)^{\kappa |\cS|} \,.
\]
Denoting by $\cF_N :=\sigma( b_k^{(i)}, x_{kj}, k \le \kappa, i,j \le N)$ 
the $\sigma$-algebra generated by $b_k^{(i)}$ of \eqref{dfn:si} 
and $\{x_{kj} \}$ of \eqref{dfn:gt}, it thus 
suffices for \eqref{eq:RS-tail} to show the existence of 
\emph{non-random} $\theta>0$, $N_1$ and $f_1=f_1(\theta)<\infty$, 
such that for all $N \ge N_1$ and $\cS \subseteq [N]$,
\begin{equation}\label{eq:unif-R-mgf}
\widetilde \E^{\beta;\cS}_{N,\kappa} \Big[
e^{\theta R_{\cS}} \one_{\cA^\cS_a} \mid \cF_N \Big]  \le f_1(\theta)^{|\cS|} \,.
\end{equation}
To this end, recall that under $\widetilde \P^{\beta;\cS}_{N,\kappa}$ the 
vectors $\{{\bf J}_i := (J_{ij}) \in \R^N,\, i\in \cS\}$ of mutually 
independent entries are \emph{independent} of $\cF_N$.
Further, in view of \eqref{dfn:gt}, 
\[
R_\cS = \sum_{i \in \cS} 
\sum_{k=1}^{\kappa} \langle {\bf x}_k, {\bf J}_i \rangle^2 \,,
\] 
where ${\bf x}_k =(x_{kj}) \in \R^N$ is such that  
$\|{\bf x}_k \|_\infty \le \sqrt{T/(N \kappa)} \beta \fs$. We thus have that 
\begin{align*}
\widetilde \E^{\beta;\cS}_{N,\kappa} \Big[
e^{\theta R_{\cS}} \one_{\cA^\cS_a} \, | \, \cF_N \Big] &\le f_{N,\kappa}
(\theta T \beta^2 \fs^2)^{|\cS|} \,,
\\
f_{N,\kappa} (\alpha) &:= \max_{i \le N}
\sup_{\{{\bf u}_k \in \R^N :   \|{\bf u}_k \|_\infty \le N^{-1/2} \}} \, \E \big[
\exp \big( \frac{\alpha}{\kappa} 
\sum_{k=1}^{\kappa} \langle {\bf u}_k, {\bf J}_i \rangle^2 \big) \one_{\cA^{\{i\}}_a} 
\big]  \,.
\end{align*}
By Jensen's inequality $f_{N,\kappa}(\alpha) \le f_{N,1}(\alpha)$.
Further, thanks to our assumption 
\eqref{eq:J-assump-mgf}, the vectors ${\bf J_i}$ of 
independent coordinates satisfy the condition \eqref{eq:expon-tail} 
for some $\epsilon>0$, $v<\infty$ which are independent of $i$ and $N$. 
Hence, taking $\theta>0$ so $\alpha=\theta T \beta^2 \fs^2$ be as 
in Lemma \ref{lem:mgf-quadratic}, results for $N \ge N_1$ with 
$f_{N,\kappa}(\alpha) \le f_\star(\alpha v)$ finite, thus
establishing \eqref{eq:unif-R-mgf} and thereby concluding the proof of the lemma.
\end{proof}

Setting $\widehat W_T^\fs$ as the metric space $C([0,T]\to[-\fs,\fs])$ equipped
with the distance
\[
d_{\infty}(x,y) = \sup_{t\in[0,T]}|x(t)-y(t)| \,,
\]
we next effectively prove an exponential tightness of $\widetilde \Pi^{\beta}_{N,\kappa}$
in the corresponding weak topology (using an entropy bound, this has been 
proved in \cite{BG98} for Gaussian disorder).
\begin{lemma}\label{lem:tight}
Fixing $T,\beta,\kappa,a_2,\alpha$, there exists  
$\cK_\alpha \subset \cM_1(\widehat{W}_T^\fs)$ compact, with
\[
\limsup_{N \to \infty} \frac{1}{N} \log 
 \widetilde \Pi^{\beta,a_2}_{N,\kappa} (\cK_\alpha^c) 
< -\alpha \,.
\]
\end{lemma}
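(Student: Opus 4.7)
The plan is to transfer the exponential tightness known for the Gaussian-disorder empirical measure $\widehat\Pi^\beta_{N,\kappa}$ (established in \cite{BG98} via an entropy bound) over to $\widetilde\Pi^{\beta,a_2}_{N,\kappa}$, using the Radon--Nikodym comparison of Lemma~\ref{lem:compare} together with the tail bound of Lemma~\ref{lem:Phi-summable}. Both ingredients are already at hand; the only extra care needed is to choose the two thresholds so that the $e^{O(\eta N)}$ cost of swapping the disorder is dominated by a Gaussian tightness bound at a sufficiently large rate, while itself dominating the exponential smallness of $\{\Phi_{N,\kappa}>2\eta\}$.

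Concretely, inspection of the proof of Lemma~\ref{lem:Phi-summable} shows that for every fixed $\eta$ it in fact yields the exponential bound $\widetilde\P^{\beta,a_2}_{N,\kappa}(\Phi_{N,\kappa}>2\eta)\le e^{-N c(\eta)+o(N)}$ with a rate $c(\eta)\to\infty$ as $\eta\to\infty$: the $\widehat\Phi_{N,\kappa}$ piece contributes rate $\varphi\eta$, while the $\Psi_{N,\kappa}$ piece contributes rate proportional to $\theta q\eta$, with the parameters $\theta,q$ frozen inside that proof. Given $\alpha$, pick $\eta>0$ so that $c(\eta)>\alpha+1$ and set $\alpha':=\alpha+2\eta+1$. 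By \cite{BG98}, there is a compact set $\cK_\alpha\subset\cM_1(\widehat W_T^\fs)$ of Prokhorov--Arzel\`a--Ascoli form
\[
\cK_\alpha = \bigcap_n \bigl\{\mu\in\cM_1(\widehat W_T^\fs): \mu\{x:w(x,\eta_n)>\epsilon_n\}\le r_n\bigr\},\qquad (\eta_n,\epsilon_n,r_n)\downarrow 0\,,
\]
which moreover satisfies $\widehat\Pi^\beta_{N,\kappa}(\cK_\alpha^c)\le e^{-\alpha' N}$ for all large $N$.

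Then, splitting on the empirical-measure-measurable event $E_N:=\{\Phi_{N,\kappa}\le 2\eta\}$ and noting that $\widetilde\Pi^{\beta,a_2}_{N,\kappa}\le\widetilde\Pi^\beta_{N,\kappa}$,
\[
\widetilde\Pi^{\beta,a_2}_{N,\kappa}(\cK_\alpha^c) \;\le\; \widetilde\Pi^\beta_{N,\kappa}\bigl(\cK_\alpha^c\cap E_N\bigr) + \widetilde\P^{\beta,a_2}_{N,\kappa}(\Phi_{N,\kappa}>2\eta)\,.
\]
On $E_N$, Lemma~\ref{lem:compare} gives $\,d\widetilde\Pi^\beta/d\widehat\Pi^\beta\le e^{2\eta N}$, so the first term is at most $e^{2\eta N}\widehat\Pi^\beta_{N,\kappa}(\cK_\alpha^c)\le e^{(2\eta-\alpha')N}=e^{-(\alpha+1)N}$, and by the choice of $\eta$ the second is at most $e^{-(\alpha+1)N}$ for all large $N$. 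Together these give $\widetilde\Pi^{\beta,a_2}_{N,\kappa}(\cK_\alpha^c)\le 2e^{-(\alpha+1)N}$, which yields the required $\limsup<-\alpha$.

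The main obstacle is the book-keeping that extracts the explicit rate $c(\eta)$ from Lemma~\ref{lem:Phi-summable} and verifies $c(\eta)\to\infty$; once this linear-in-$\eta$ behavior of the two exponents in that proof is recorded, the remainder is essentially a one-line optimization between the Girsanov-type cost $e^{2\eta N}$ and the Gaussian tightness rate $\alpha'$. A secondary technical point is that the compact set from \cite{BG98} is produced for the unrestricted Gaussian measure $\widehat\Pi^\beta_{N,\kappa}$ rather than its $\cA_{a_2}$-restriction, but since we only need an \emph{upper} bound on $\widehat\Pi^\beta(\cK_\alpha^c)$ and $\widehat\Pi^{\beta,a_2}\le\widehat\Pi^\beta$, this poses no difficulty.
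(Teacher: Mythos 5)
Your proposal is correct in outline, but it takes a genuinely different route from the paper, and the differences matter.

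The paper does \emph{not} change measure to the Gaussian-disorder frozen system $\widehat\Pi^\beta_{N,\kappa}$. Instead it changes measure to the interaction-free reference $\Pi^0_N$: from the Girsanov identity \eqref{eq:rdn-beta} one has $\frac{{\rm d}\widetilde\P^\beta_{N,\kappa}}{{\rm d}\P^0_N}\le \exp(\sum_i M^{(i)}_\kappa)$, and on $\cA_{a_2}$ the bound $\sum_i M^{(i)}_\kappa \le 2\sum_i\widehat M^{(i)}_\kappa + Ta_2^2\fs^2 N$ controls the Girsanov factor by an i.i.d.\ Gaussian chaos $\sum_i\widehat M^{(i)}_\kappa$, whose tail is handled by an elementary \textsc{cgf}/Markov estimate. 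What remains is the exponential tightness of $\Pi^0_N$, which is immediate: it is the empirical measure of i.i.d.\ paths in a Polish space, so one may invoke \cite[Lemma 6.2.6]{DZ98}. Nothing from \cite{BG98} and nothing from Lemma~\ref{lem:Phi-summable} is used. This makes the paper's Lemma~\ref{lem:tight} self-contained and applicable to \emph{all} admissible disorders simultaneously; indeed the paper then turns around and uses this very lemma to establish the Gaussian-disorder tightness needed in Lemma~\ref{lem:weak-ld-gauss}, so a proof that took Gaussian tightness as an input would be somewhat backwards within the paper's logical flow.

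Your route is nevertheless viable. The pieces you need are: (a) exponential tightness of $\widehat\Pi^\beta_{N,\kappa}$ at all rates, which you attribute to \cite{BG98}; and (b) an explicit rate $c(\eta)\to\infty$ extracted from the proof of Lemma~\ref{lem:Phi-summable}. Your reading of (b) checks out: the $\widehat\Phi$ term contributes rate $\varphi\eta$ (up to $q^{-\kappa}\widehat\delta_N\to 0$), and the $\Psi$ term contributes rate $\theta q\eta/2$ (up to the $e^{o(N)}$ entropy cost of the union bound over $\cS$ and the $f_0,f_1$ factors on $|\cS|=\ell_N=o(N)$ indices); both are linear in $\eta$ with $\theta,q,\varphi$ fixed. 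So $c(\eta)\to\infty$ and the balancing $\alpha'=\alpha+2\eta+1$ works. The split $\widetilde\Pi^{\beta,a_2}(\cK^c_\alpha)\le\widetilde\Pi^\beta(\cK_\alpha^c\cap E_N)+\widetilde\P^{\beta,a_2}(\Phi_{N,\kappa}>2\eta)$ with the $e^{2\eta N}$ transfer via Lemma~\ref{lem:compare} on $E_N$ is also fine. Two caveats, however. First, (a) is a substantive external input: you need \cite{BG98} to give exponential tightness for the \emph{piecewise-frozen} Gaussian system at arbitrarily large rates; the paper only remarks that \cite{BG98} proved such a bound ``using an entropy bound'' but does not rely on it, and one would have to verify it applies verbatim to $\widehat\Pi^\beta_{N,\kappa}$. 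Second, (b) is not a stated conclusion of Lemma~\ref{lem:Phi-summable} (which asserts only summability), so your proof necessarily reopens that argument and freezes its internal parameters; this is extra book-keeping the paper avoids entirely by choosing $\Pi^0_N$ as the reference measure. In short: your argument works, but the paper's change of measure to $\beta=0$ buys a shorter, self-contained proof with no dependence on \cite{BG98} or on rate extraction from Lemma~\ref{lem:Phi-summable}.
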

\begin{proof} It follows from \eqref{eq:rdn-beta} that for $M_\kappa^{(i)}:=\frac{1}{2} \|{\bf b}^{(i)}\|^2_2$ and ${\bf b}^{(i)}$ of \eqref{dfn:si},
\[
\frac{{\rm d} \,\widetilde \P_{N,\kappa}^\beta}{{\rm d}\, \P_{N}^0} 
\le \exp\Big(\sum_{i=1}^N M_\kappa^{(i)}\Big)\,.
\]
By \eqref{eq:RN-L1-bd} and the \abbr{lhs} of \eqref{eq:M-to-whM}, having 
$\|{\bf A}\|_{2 \to 2} \le a_2$ yields that
\[
\sum_{i=1}^N M_\kappa^{(i)} \le  
2 \sum_{i=1}^N {\widehat M}_\kappa^{(i)} + T a_2^2 \fs^2 N  \,.
\]
Further, under $\widetilde \P^{\beta}_{N,\kappa}$ the variables $\{\widehat b_k^{(i)}\}$
are i.i.d.\ standard Gaussian, independent of ${\bf A}$, hence
for any $\cA \subset \cM_1(\widehat{W}^\fs_T)$ 
and $r' = (r - T a_2^2 \fs^2)/2 \ge 1$,
\begin{equation}\label{eq:coarse-bd-rnd}
\widetilde \Pi^{\beta,a_2}_{N,\kappa} (
\cA)
\le e^{\kappa r N} \, \Pi^{0}_{N} (
\cA)
+ \widetilde \Pi^{\beta}_{N,\kappa} (\sum_{i=1}^N 
\widehat{M}_\kappa^{(i)} \ge \kappa r' N) \,.
\end{equation}
The \abbr{cgf} 
$\log \widetilde \E_{N,\kappa}^{\beta} [e^{\theta {\widehat M}^{(1)}_\kappa}] 
= \kappa \Lambda(\theta)$ which is independent of $N$ and ${\bf A}$ 
(hence also on $\beta$), is finite at $\theta<1$ and has
$\kappa \Lambda'(0) = \widetilde \E_{N,\kappa}^{\beta}
\widehat {M}_\kappa = \kappa/2$. Thus,   
$\theta \ge \Lambda(\theta)$ for small enough $\theta>0$, so applying 
Markov's inequality, we get for such $\theta>0$ and i.i.d.\  ${\widehat M}_\kappa^{(i)}$, 
that for some $\widehat{r}=\widehat{r}(\alpha,\kappa)$ finite,
\begin{equation}\label{eq:exp-whM}
\widetilde \Pi^{\beta}_{N,\kappa} 
(\sum_{i=1}^N {\widehat M}_\kappa^{(i)} \ge \kappa \widehat{r} N) \le 
\exp( - N \kappa [\theta \widehat{r} - \Lambda(\theta)] )  < \exp( - \alpha N)\,.
\end{equation}
Thus, thanks to \eqref{eq:coarse-bd-rnd} and \eqref{eq:exp-whM},
it suffices to verify that $\Pi_N^{0}$
are exponentially tight in $\cM_1(\widehat{W}_T^\fs)$. To this end,
recall that this is the law of the empirical measure $\mu_N$ of 
independent $(X_\cdot^{(i)})$ (namely, the solutions of the \abbr{sds}~\eqref{eq:main_eq} which are uncoupled at~$\beta=0$). 
These i.i.d.\ variables take value in a Polish space~$\widehat{W}_T^\fs$, whence
$\Pi_N^{0}$ is exponentially tight in the induced weak topology 
(see~\cite[Lemma 6.2.6]{DZ98}).
\end{proof}

We have the following upon combining \cite{BG95} and Lemma \ref{lem:tight}.
\begin{lemma}\label{lem:weak-ld-gauss}
For every $\beta,\kappa,T,a_2$ there exists a good rate function $I_\kappa$ 
on $\cM_1(\widehat W_T^\fs)$
such that, for any closed $\cF \subset \cM_1(\widehat W_T^\fs)$, 
\[ 
\limsup_{N\to\infty}\frac1N \log\widehat\Pi_{N,\kappa}^{\beta,a_2} 
(\cF) \leq -I_\kappa(\cF)\,.
\]
In addition, 
$I_{\kappa}(\cF)\to I(\cF)$ as $\kappa\to\infty$ and $I(\cdot)$ is a good rate function 
whose unique minimizer is $\mu_\star$.
\end{lemma}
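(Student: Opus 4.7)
The plan is to derive the claim from the Gaussian-disorder analysis of~\cite{BG95,BG98,Guionnet97}, with Lemma~\ref{lem:tight} supplying the exponential tightness needed to upgrade a weak LDP upper bound to a full upper bound on closed sets.

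First, I observe that with Gaussian disorder $\widehat{\bf J}$ the Radon--Nikodym argument underlying~\eqref{eq:rnd} applies verbatim and yields an explicit density
\[
\frac{d\widehat\Pi^\beta_{N,\kappa}}{d\Pi^0_N}(\widetilde\mu_{N,\kappa}) = \exp\big(N\widehat\Gamma^\beta_\kappa(\widetilde\mu_{N,\kappa})\big),
\]
where the analogue of the inner moment generating function in~\eqref{dfn:Gamma} is now a tractable Gaussian integral of a quadratic form. Consequently $\widehat\Gamma^\beta_\kappa$ depends on $\widetilde\mu_{N,\kappa}$ only through the covariance and drift kernels of the frozen \abbr{sds}~\eqref{eq:main_eq_frozen}, and is continuous and bounded on $\cM_1(\widehat W_T^\fs)$ thanks to $\|\widetilde X\|_\infty \le \fs$. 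Next I invoke Sanov's theorem for $\Pi^0_N$---the law of the empirical measure of $N$ i.i.d.\ uncoupled diffusions on the Polish space $\widehat W_T^\fs$---which supplies a good rate function $H(\cdot\,\|\,\mu_0)$ against the common $\beta=0$ marginal $\mu_0$, and then apply Varadhan's lemma to extract a weak LDP upper bound for $\widehat\Pi^\beta_{N,\kappa}$ with good rate
\[
I_\kappa(\mu) := H(\mu\,\|\,\mu_0) - \widehat\Gamma^\beta_\kappa(\mu) - \inf_\nu \big\{ H(\nu\,\|\,\mu_0)-\widehat\Gamma^\beta_\kappa(\nu)\big\}.
\]

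Second, since $\widehat\Pi^{\beta,a_2}_{N,\kappa} \le \widehat\Pi^\beta_{N,\kappa}$ as sub-probability measures, the weak upper bound transfers immediately to the restricted laws. Combining it with the exponential tightness of $\widehat\Pi^{\beta,a_2}_{N,\kappa}$ supplied by Lemma~\ref{lem:tight} (applicable to Gaussian disorder, which trivially satisfies~\eqref{eq:J-assump-unif-mom}) upgrades the weak bound to the asserted upper bound on all closed $\cF\subset\cM_1(\widehat W_T^\fs)$.

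Third, the convergence $I_\kappa(\cF)\to I(\cF)$ as $\kappa\to\infty$ and the identification of $\mu_\star$ as the unique minimizer of $I$ are precisely the content of~\cite{BG95,BG98,Guionnet97}: as the mesh $T/\kappa\to 0$, the quadratic functional $\widehat\Gamma^\beta_\kappa$ converges to a limit functional $\widehat\Gamma^\beta$ characterized by the covariance and response kernels of the non-Markovian self-consistent single-spin diffusion, and $\mu_\star$ is identified as the unique law realizing those kernels, i.e.\ the unique minimizer of $H(\cdot\,\|\,\mu_0)-\widehat\Gamma^\beta$. The main obstacle is the passage $\kappa\to\infty$ at the level of the rate functions on arbitrary closed $\cF$: pointwise convergence $I_\kappa(\mu)\to I(\mu)$ follows from mesh refinement of $\widehat\Gamma^\beta_\kappa$, but the $\Gamma$-convergence of infima over closed sets requires equi-coerciveness of $\{I_\kappa\}$, which in turn reduces to the $\kappa$-uniform exponential tightness supplied by Lemma~\ref{lem:tight}. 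Uniqueness of the minimizer is then inherited from the self-consistent fixed-point characterization established in the cited works.
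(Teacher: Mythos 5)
Your proposal takes a genuinely different route from the paper. The paper's proof is a short citation argument: the weak (compact-set) upper bound with good rate $I_\kappa$ is taken directly from \cite[Theorem~3.1(1)--(2)]{BG95}, Lemma~\ref{lem:tight} upgrades it to closed sets, and \cite[Prop.~4.3(1)]{BG95} supplies both the convergence $I_\kappa\to I$ and uniqueness of the minimizer $\mu_\star$. You instead attempt to re-derive the weak bound from first principles via Sanov's theorem for $\Pi_N^0$ plus an exponential tilting by $\exp\bigl(N\widehat\Gamma_\kappa^\beta(\cdot)\bigr)$.

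There is a genuine gap in that re-derivation: the claim that $\widehat\Gamma_\kappa^\beta$ is \emph{bounded} (and continuous) on $\cM_1(\widehat W_T^\fs)$ ``thanks to $\|\widetilde X\|_\infty\le\fs$'' is false, and this is precisely the technical obstruction that \cite{BG95} has to work around. The functional involves not only the interaction vectors $\mathbf g^{(i)}$ --- which are indeed controlled by $\|\widetilde X\|_\infty\le\fs$ --- but also the vectors $\mathbf b^{(i)}$ of \eqref{dfn:si}, whose entries contain the term $\int_{\ft_{k-1}}^{\ft_k} U_1'(\widetilde X_s)\,ds$. Since $U_1'$ blows up as $|x|\uparrow\fs$, $\|\mathbf b^{(i)}\|_2$ is unbounded on $\widehat W_T^\fs$. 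The explicit Gaussian computation gives
$\log \E_{\widehat{\mathbf J}}\bigl[e^{\langle \mathbf b,\mathbf g\rangle-\frac12\|\mathbf g\|_2^2}\bigr]
=\tfrac12\,\mathbf b^{\Tr}\Sigma(\mathbf I+\Sigma)^{-1}\mathbf b-\tfrac12\log\det(\mathbf I+\Sigma)$
with $\Sigma=\mathbf X\mathbf X^{\Tr}$, which grows quadratically in $\|\mathbf b\|_2$; so $\widehat\Gamma_\kappa^\beta$ is unbounded above. Applying the tilted-LDP/Varadhan machinery therefore requires verifying a superexponential moment condition (or truncating near $\partial[-\fs,\fs]$), which your write-up elides. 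A secondary slip: you invoke ``$\kappa$-uniform exponential tightness supplied by Lemma~\ref{lem:tight}'' for the passage $\kappa\to\infty$, but Lemma~\ref{lem:tight} is stated with $\kappa$ fixed and is not asserted to be uniform in $\kappa$; the paper obtains the convergence $I_\kappa\to I$ and the uniqueness of $\mu_\star$ as a citation to \cite[Prop.~4.3(1)]{BG95} rather than through a $\Gamma$-convergence argument.
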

\begin{proof} The large deviations 
upper bound with a good rate function $I_\kappa(\cdot)$ is established  
in \cite[Theorem~3.1(1)--(2)]{BG95} for $\cF$ compact and 
$\widehat\Pi_{N,\kappa}^{\beta,a_2}$. The exponential 
tightness from Lemma \ref{lem:tight} applies in particular for the Gaussian 
disorder $\widehat {\bf J}$, thereby extending the validity of such upper bound 
to all closed sets $\cF$. Finally, \cite[Prop.~4.3(1)]{BG95} shows
the convergence of $I_\kappa(\cdot)$ to some 
$I(\cdot)$ whose global minimizer $\mu_\star$ is unique.
\end{proof}

\begin{proof}[\textbf{\emph{Proof of Proposition~\ref{thm-disc}}}] We actually prove a slightly stronger 
statement, where $\B(\mu_\star,\delta)$ denotes instead the ball of radius $\delta>0$
and center $\mu_\star$ in $\cM_1(\widehat{W}^\fs_T)$. Fixing $T,\beta,\delta,a_2$,
we have by Lemma \ref{lem:compare} and the union bound, that for any $\kappa$,
$N \ge N_0(\kappa)$ and all $\eta>0$,
\[
\widetilde\Pi_{N,\kappa}^{\beta,a_2} ( \B (\mu_\star,\delta)^c) \le 
\widetilde \P_{N,\kappa}^{\beta,a_2} (\Phi_{N,\kappa} > 2 \eta) + 
e^{2 \eta N} \, \widehat \Pi_{N,\kappa}^{\beta,a_2} (\B (\mu_\star,\delta)^c) \,.
\]
In view of Lemma \ref{lem:Phi-summable} it thus suffices to 
show that for any $\delta >0$ and all $\kappa \ge \kappa_0(\delta)$ 
\begin{equation}\label{eq:ldp-ubd1}
\limsup_{N \to \infty} \frac{1}{N} \log 
\widehat \Pi_{N,\kappa}^{\beta,a_2} (\B (\mu_\star,\delta)^c)
< 0 \,.
\end{equation}
Recall from Lemma \ref{lem:weak-ld-gauss}, that $I(\cF_\delta)>0$
for the closed set $\cF_\delta = \B(\mu_\star,\delta)^c$ and therefore 
$I_\kappa(\cF_\delta) > 0$ for all $\kappa \ge \kappa_0(\delta)$. We thus get \eqref{eq:ldp-ubd1} and thereby complete the proof of 
the theorem, upon considering the \abbr{ldp} upper bound of 
Lemma \ref{lem:weak-ld-gauss} for this $\cF_\delta$.
\end{proof}

\section{Proof of Proposition \ref{thm-coupling}}\label{sec:couple}
For $\beta>0$, we couple $X_t$ and $\widetilde{X}_t$ using the same Brownian motion: 
writing \[ \cE_t := \widetilde{X}_t-X_t \]
we see that
\begin{align*}
\frac{d}{dt}\cE_t &= \nabla U(X_t)-\nabla U(\widetilde{X}_t) + \frac\beta{\sqrt{N}}{\bf J} (\widetilde{X}_{\ft_{\lfloor t \kappa / T\rfloor}}-X_t)\,,\\
\cE_0 &= 0\,.
\end{align*}
Let
\[ R_t = \| \cE_t\|_2\quad\mbox{and}\quad {\bf A} = \frac{\beta}{\sqrt{N}}{\bf J}\,.\]
Then
\begin{align*}  R_t \frac{d}{dt} R_t &=
\Big< \cE_t,\frac{d}{dt} \cE_t\Big>
\\ &=
 \left< \cE_t, {\bf A} \cE_t\right> + \left< \cE_t,\nabla U(X_t) - \nabla U(\widetilde{X}_t)\right> + \left<\cE_t, {\bf A}(\widetilde{X}_{\ft_{\lfloor t \kappa / T\rfloor}} - \widetilde{X}_t)\right>\,,
\end{align*}
which, by the mean value theorem, is at most
\[ \|{\bf A}\|_{2\to 2} R_t^2 + R_t^2 c'' + \|{\bf A}\|_{2\to 2} R_t L_t\,,
\]
where, for fixed $\epsilon>0$ and $\rho>0$, we define
\[ L_t = \| \widetilde{X}_{\ft_{\lfloor t \kappa / T\rfloor}} - \widetilde{X}_t\|_2
\qquad\mbox{ and }\qquad c'' = \sup_{|x|\leq \fs}(-U_1''(x))\,.
\]
Restricted to the event $\cA_{a_2}$, we have that 
\begin{equation}\label{eq:R't-up-to-stopping-times}
\frac{d}{dt}R_t \leq \left(a_2 + c''\right)R_t + 3 a_2 \rho \sqrt{N}\,,
\end{equation}
up to the stopping time 
\[
\widetilde\tau_\rho := \inf\{t\geq 0\,:\; L_t\geq  3\rho \sqrt{N}\}\,.
\]
Solving the \abbr{ode} that corresponds to equality in
\eqref{eq:R't-up-to-stopping-times}, starting at $R_0=0$, 
results with  
\[ 
R_t \leq \frac{3a_2}{a_2+c''}(e^{(a_2+c'')t}-1) \rho \sqrt{N} \le \delta \sqrt{N} 
\]
up to $\widetilde \tau_\rho$, provided that
\[ \rho \leq \delta \bigg(\frac{a_2+c''}{3a_2}\bigg) e^{-(a_2+c'')T}\,.\]
In particular, for such $\rho=\rho(\delta,a_2,T)>0$ it then follows that 
\begin{align*} \Q^{\beta,a_2}_N \Big(\sup_{t\in[0,T]} R_t \geq \delta\sqrt{N}\Big) &\leq  \P^{\beta,a_2}_N(\widetilde{\tau}_\rho \leq T \,, D_\epsilon^c) + 
\P^{\beta,a_2}_N (D_\epsilon) \,,
\end{align*}
for any $\epsilon>0$, where 
\begin{align*}
      \widetilde\sigma_{\epsilon}^{(i)} &= \inf\{ t\geq 0 \,:\; |\widetilde{X}_t^{(i)}| > \fs-\epsilon\}\,,\qquad     D_{\epsilon} = \Big\{ \sum_{i=1}^N \one_{\{\widetilde\sigma_\epsilon^{(i)} < T\}} > \frac{\rho^2}{\fs^2} N\Big\} \,.
\end{align*}
Recall from \cite[Thm.~4.1(a)]{BG95} that $\mu_\star$ is absolutely continuous 
w.r.t.\ the law $\P_1^0$ of the solution $X^{(1)}_t$, $t \in [0,T]$, of a single \abbr{sde} \eqref{eq:main_eq} at $\beta=0$. Further,
$\P_1^0(\{x : \|x\|_\infty=\fs\})=0$ thanks to \eqref{eq:U-assump}, hence 
for any small $\epsilon>0$ and the corresponding closed subset 
\begin{equation}\label{eq:Feps}
\mu_\star \notin \cF_\epsilon := 
\Big\{ \mu : \mu( \|x\|_\infty \ge \fs - \epsilon ) \ge \frac{\rho^2}{\fs^2} \Big\} \,.
\end{equation}
Fixing such $\epsilon
>0$, we proceed to bound $L_t$. To this end,
recall that for any $t\in[\ft_k,\ft_{k+1}]$,
\[ \widetilde{X}_t - \widetilde{X}_{\ft_k} = -\int_{\ft_k}^t \nabla U(\widetilde{X}_\xi)d\xi + B_t - B_{\ft_k} + (t-\ft_k) {\bf A} \widetilde{X}_{\ft_k}\,.\]
Hence, with $t-\ft_k \leq T/\kappa$, setting  
$c'_\epsilon := \sup_{|x|\leq \fs-\epsilon}|U_1'(x)|$ we get that 
\[
\sum_{i=1}^N \one_{\{\widetilde{\sigma}_\epsilon^{(i)} \ge  T\}}\big|\widetilde{X}^{(i)}_t-\widetilde{X}^{(i)}_{\ft_k}\big|^2
\leq 
\bigg[c'_\epsilon \frac{T}\kappa \sqrt{N} + \|B_t-B_{\ft_k}\|_2 + \frac{T}\kappa\|{\bf A}\|_{2\to 2}\, \|\widetilde{X}_{\ft_k}\|_2\bigg]^2\,.
\]
At the same time, on the event $D_\epsilon^c$ we have that 
\[
\sum_{i=1}^N \one_{\{\widetilde{\sigma}_\epsilon^{(i)} < T\}}\big|\widetilde{X}^{(i)}_t-\widetilde{X}^{(i)}_{\ft_k}\big|^2
\leq (2\rho)^2 N\,.
\]
Consequently, (using that $\|\widetilde{X}_{\ft_k}\|_2 \leq \fs\sqrt N$ and the restriction to the event $\cA_{a_2}$) we deduce that as soon as 
\[ \kappa \geq \kappa_1 := \left\lceil (c'_\epsilon + a_2 \fs) T/\rho\right\rceil \]
we have by the independence of the Brownian increments (and a union bound),
\[ \P_{N}^{\beta,a_2} (\widetilde{\tau}_\rho \leq T \,,\, D_\epsilon^c) 
\leq \kappa \,\P\bigg( \sup_{t\in[0,\frac{T}{\kappa}]} \{ \|B_t\|^2_2 \} \geq \rho^2  N\bigg)\,.
\]
To bound the latter, note that 
\[ u(t,x) = \exp\bigg[ \frac{x^2}{1+2t}-\frac12\log(1+2t)\bigg]\,,\]
is a positive, smooth solution of the heat equation $u_t+\frac12u_{xx}=0$.
Hence, by Ito's formula the integrable $M_t^{(i)} := u(t,B_t^{(i)})$ are i.i.d.\ 
positive martingales, starting at $M_0^{(i)} = 1$. Next, increasing 
$\kappa_1$ as needed in order to have 
\[ \eta := \frac{\rho^2}{1+2T/\kappa_1} - \frac12\log(1+2T/\kappa_1) > 0 \,,\]
and applying Doob's maximal inequality for the positive martingale 
$\overline{M}_t = \prod_{i=1}^N M_t^{(i)}$,
we deduce that for any $\kappa\geq \kappa_1$,
\[\P\bigg( \sup_{t\in[0,\frac{T}{\kappa}]} \{ \|B_t\|^2_2 \} \geq \rho^2  N\bigg) \leq \P\bigg(\sup_{t\in[0, \frac{T}\kappa]} \{ \overline M_t \} \geq e^{\eta N}\bigg) \leq 
e^{-\eta N} \,.
\]
Turning now to show that $\P_{N}^{\beta,a_2}(D_\epsilon)$ is summable in $N$ for all
$\kappa \ge \kappa_1(\epsilon,\rho)$, note that 
$D_\epsilon \subseteq \{ \widetilde \mu_{N,\kappa} \in \cF_\epsilon \}$
for $\cF_\epsilon$ of \eqref{eq:Feps}. Thus, proceeding as in the proof of 
Proposition \ref{thm-disc}, 
we have by Lemma \ref{lem:compare} and the union bound, that for any $\kappa$,
$N \ge N_0(\kappa)$ and all $\eta>0$,
\[
 \P_{N}^{\beta,a_2}( D_\epsilon) 
\le \widetilde \Pi^{\beta,a_2}_{N,\kappa} (\cF_\epsilon) \le 
 \widetilde\P^{\beta,a_2}_{N,\kappa} (\Phi_{N,\kappa} > 2 \eta) + 
e^{2 \eta N} \, \widehat \Pi_{N,\kappa}^{\beta,a_2}( \cF_\epsilon) \,.
\]
Thanks to Lemma \ref{lem:Phi-summable}, it thus suffices to 
establish that for all $\kappa \ge \kappa_1$, 
\begin{equation}\label{eq:ldp-ubd}
\limsup_{N \to \infty} \frac{1}{N} \log 
\widehat \Pi_{N,\kappa}^{\beta,a_2}( \cF_\epsilon)
< 0 \,,
\end{equation}
which as we have seen before, follows from Lemma \ref{lem:weak-ld-gauss} 
since $\mu_\star \notin \cF_\epsilon$ (hence $I(\cF_\epsilon)>0$).

\subsection*{Acknowledgment} 
We thank G.\ Ben Arous and A.\ Guionnet for a valuable feedback on our preliminary draft and for 
 pointing our attention to the references~\cite{BG98,FMT19,Grunwald96,Rieger1989}.
A.D.~was supported in part by NSF grant DMS-1613091 and E.L.~was supported in part by 
NSF grant DMS-1812095. 
This research was further supported in part by BSF grant 2018088.

\bibliographystyle{abbrv}
\bibliography{langevin_ref}

\end{document}